\newtheorem{theorem}{Theorem}[section]
\newtheorem{lemma}[theorem]{Lemma}
\newtheorem{proposition}[theorem]{Proposition}
\newtheorem{corollary}[theorem]{Corollary}
\theoremstyle{definition}
\newtheorem{definition}[theorem]{Definition}
\newtheorem{problem}[theorem]{Problem}
\theoremstyle{remark}
\newtheorem{remark}[theorem]{Remark}
\newtheorem{notation}[theorem]{Notation}
\newtheorem{claim}[theorem]{Claim}
\numberwithin{equation}{section}
\newcommand{\abs}[1]{\lvert#1\rvert}
\newcommand{\R}{\mathbb{R}}
\newcommand{\I}{\mathrm{I}}
\newcommand{\N}{\mathbb{N}}
\newcommand{\SL}{\mathrm{SL}}
\newcommand{\dd}{\mathrm{d}}
\newcommand{\SO}{\mathrm{SO}}
\newcommand{\vect}[1]{\boldsymbol{\mathbf{#1}}}
\newcommand{\inv}{^{-1}}
\newcommand{\Ad}{\mathrm{Ad}}
\newcommand{\norm}[1]{\| #1 \|}
\begin{document}

\title{Equidistribution of expanding translates of curves in homogeneous spaces with the action of $(\SO(n,1))^k$}


\author{Lei Yang}
\address{Einstein Institute of Mathematics, The Hebrew University of Jerusalem, Givat Ram., Jerusalem, 9190401, Israel.}
\email{yang.lei@mail.huji.ac.il}

\thanks{The author is supported in part by ISF grant 2095/15 and ERC grant AdG 267259.}
\subjclass[2010]{Primary 37A17; Secondary 22E40, 37D40}

\date{}


\keywords{Equidistribution, Homogeneous spaces, Ratner's Theorem}

\begin{abstract}
Given a homogeneous space $X = G/\Gamma$ with $G$ containing the group $H = (\SO(n,1))^k$. Let $x\in X$ such that $Hx$ is dense in $X$. Given an analytic curve 
$\phi: I=[a,b] \rightarrow H$, we will show that if $\phi$ satisfies certain geometric condition, then for a typical diagonal subgroup $A =\{a(t): t \in \R\} \subset H$ the translates $\{a(t)\phi(I)x: t >0\}$ of the curve $\phi(I)x$ will tend to be equidistributed in $X$ as $t \rightarrow +\infty$. The proof is based on the study of linear representations of $\SO(n,1)$ and $H$.
\end{abstract}

\maketitle

\section{Introduction}


\subsection{Equidistribution of expanding translates of curves in homogeneous spaces} 
\label{subsec-equidistribution-expanding-translates-curves-homogeneous}
The study of limit distributions of expanding translates of curves in homogeneous spaces is initiated by Nimish Shah 
\cite{Shah_1}, \cite{Shah_3}, \cite{Shah_2}, and \cite{Shah2010}. It is a very important topic in homogeneous 
dynamics and has interesting applications to number theory and geometry. The basic setup of this type of 
problems can be summarized as follows:
\begin{problem}
\label{problem:expanding-curves-equidistribution}
Let $H$ be a semisimple Lie group and $G$ be a Lie group containing $H$. Let $\Gamma \subset G$ be a lattice in $G$. Then the homogeneous space $X := G/\Gamma$ admits a $G$-invariant probability measure $\mu_G$. Let $A = \{a(t): t \in \R\} \subset H$ be a one-parameter diagonalizable subgroup of $H$, and let $\phi : I =[a,b] \rightarrow H$ be a compact piece of analytic curve in $H$ such that $\phi(I)$ is expanded by the conjugate action of $\{a(t): t >0\}$. One can ask what condition on $\phi$ we need to ensure that for any $x =g\Gamma \in X$ such that $Hx$ is dense in $X$, the expanding translates of $\phi(I)x$ by $\{a(t): t >0\}$, namely, $\{ a(t)\phi(I)x : t >0 \}$, tend to be equidistributed in $X$ with respect to $\mu_G$ as $t \rightarrow +\infty$. 
\end{problem}
\begin{remark}
\label{rmk:orbit-closure}
Without loss of generality, throughout this paper, we always assume that $Hx$ is dense in $X$. In fact, if $Hx$ is not dense, then by Ratner's orbit closure theorem (see \cite[Theorem A and Theorem B]{ratner_2}), its closure is $Fx$ for some analytic subgroup $F \subset G$ such that $F \cap g\Gamma g\inv$ is a lattice in $F$, then we may replace $G$ by $F$, $\Gamma$ by $F\cap g \Gamma g^{-1}$.
\end{remark}
\subsection{Main result}
\label{subsec-main-result}
\par In this paper, we consider the following special case. Let $H = (\SO(n,1))^k$ and $G$ be a Lie group containing $H$. Let us fix a maximal one parameter $\R$-split Cartan subgroup $B =\{b(t): t\in \R\}$ in $\SO(n,1)$. Given $k$ positive numbers $\eta_1, \eta_2, \dots, \eta_k$, let us define the diagonal subgroup
 \[A : = \{ a(t):= (b(\eta_1 t), b(\eta_2 t), \dots, b(\eta_k t)) \in H : t \in \R\}.\]
 \begin{remark}
 The definition of $A$ depends on $\eta_1, \eta_2, \dots, \eta_k$. However, since we fix these parameters throughout this paper, we may denote the subgroup by $A$ instead of $A(\eta_1, \eta_2, \dots, \eta_k)$ for simplicity.
 \end{remark}
 Let $\phi: I= [a,b] \rightarrow H$ be an analytic curve in $H$, and $x = g\Gamma \in X$ be such that $Hx$ is dense in $X$. We will find a geometric condition on $\phi$ such that $a(t) \phi(I) x$ tends to be equidistributed in $X$ with respect to $\mu_G$ as $ t \rightarrow +\infty$. By a standard reduction argument (see \cite[Proof of Theorem 1.2]{Shah_2} and \cite[Proof of Theorem 2.1]{Yang_2}), one can reduce the problem to the case where the curve is contained in the expanding horospherical subgroup of $H$ with respect to $A$:
 $$U_H^+ (A) : = \{ h \in H : a(-t) h a(t) \rightarrow e \text{ as } t \rightarrow +\infty\}.$$
 It is easy to see that the expanding horospherical subgroup $U^{+}(B)$ of $\SO(n,1)$ with respect to $B$ is a unipotent subgroup isomorphic to $\R^{n-1}$ with the following isomorphism: $u: \R^{n-1} \rightarrow U^{+}(B)$. This implies that 
 $$U_H^+(A) = \{(u(\vect{x}_1), u(\vect{x}_2), \dots, u(\vect{x}_k)) : \vect{x}_i \in \R^{n-1} \text{ for } i=1,2,\dots, k\}.$$
 \begin{definition}
 \label{def:u_k}
 We define 
 $$u_k :(\R^{n-1})^k \rightarrow U_H^+(A) $$
 by $u_k(\vect{x}_1, \vect{x}_2, \dots, \vect{x}_k) := (u(\vect{x}_1),u(\vect{x}_2), \dots, u(\vect{x}_k))$. 
 Then $u_k$ gives an isomorphism between $(\R^{n-1})^k$ and $U_H^+ (A)$.
  \end{definition}

 \begin{definition}
 \label{def:hyperboloids}
 Let $\|\cdot\|_2$ denote the standard Euclidean norm on $\R^{n-1}$, namely,
  $$\|\vect{x}\|_2 := (x_1^2 + x_2^2 + \cdots + x_{n-1}^2)^{1/2}$$
  for $\vect{x} =(x_1, x_2, \dots, x_{n-1}) \in \R^{n-1}$. For $\vect{x} \in \R^{n-1}\setminus\{\vect{0}\}$, let us define 
  $\vect{x}\inv : = \frac{\vect{x}}{\|\vect{x}\|_2^2}$. For $(\vect{x}_1, \vect{x}_2, \dots, \vect{x}_k) \in (\R^{n-1})^k$ with $\vect{x}_i \neq \vect{0}$ for $i=1,2,\dots, k$, let us define 
  $$(\vect{x}_1, \vect{x}_2, \dots, \vect{x}_k) \inv := (\vect{x}_1\inv, \vect{x}_2 \inv , \dots, \vect{x}_k \inv).$$ 
 \end{definition}
 
 \par The main result of this paper is the following:
 \begin{theorem}
 \label{thm:main-result}
 Let $H = (\SO(n,1))^k$, $A =\{a(t): t \in \R\} \subset H$, $G$, $\Gamma \subset G$, $X = G/\Gamma$, $\mu_G$, $x = g \Gamma \in X$ and $u_k: (\R^{n-1})^k \rightarrow U_H^+(A)$ be as above. For any analytic curve $\varphi: I = [a,b] \rightarrow (\R^{n-1})^k$
 such that for some $s_0 \in I$, $(\varphi(s) - \varphi(s_0))\inv$ is defined for almost every $s\in I$ and $\{(\varphi(s_1) - \varphi(s_0))\inv - (\varphi(s_2) - \varphi(s_0))\inv : s_1, s_2 \in I\}$ is not contained in any proper subspace of $(\R^{n-1})^k$, we have that the expanding translates $a(t)u_k(\varphi(I))x$ tend to be equidistributed in $X$ with respect to $\mu_G$ as $t \rightarrow +\infty$, namely, for any compactly supported continuous function $f \in C_c(X)$,
 $$\lim_{t \rightarrow +\infty} \frac{1}{|I|} \int_{a}^b f(a(t)u_k(\varphi(s))x) \dd s = \int_{X} f \dd \mu_G.$$
 \end{theorem}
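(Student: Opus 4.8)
The plan is to follow the dynamical method introduced by Shah and developed in the author's earlier work, reducing the equidistribution in Theorem \ref{thm:main-result} to a statement about linear representations of $\SO(n,1)$ and of $H$. First I would normalize the curve: since $U_H^+(A)$ is abelian and $u_k$ is a group isomorphism, $a(t)u_k(\varphi(s))x = a(t)u_k(\varphi(s)-\varphi(s_0))\,u_k(\varphi(s_0))x$, so replacing $\varphi$ by $\varphi-\varphi(s_0)$ and $x$ by $u_k(\varphi(s_0))x$ --- which affects neither the density of the $H$-orbit nor the hypothesis --- I may assume $\varphi(s_0)=\vect 0$, so $\varphi(s)\inv$ is defined for a.e.\ $s\in I$. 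For $t>0$ let $\mu_t$ be the push-forward of the normalized Lebesgue measure on $I$ under $s\mapsto a(t)u_k(\varphi(s))x$. In any finite-dimensional linear representation of $G$ the matrix entries of $s\mapsto a(t)u_k(\varphi(s))g$ are, for every $t$, linear combinations from one fixed finite-dimensional space of analytic functions of $s$, hence uniformly $(C,\alpha)$-good on $I$; by the Dani--Margulis non-divergence estimates $\{\mu_t\}_{t>0}$ is then tight, so every weak-$*$ limit $\mu$ of $\mu_t$ as $t\to+\infty$ is a probability measure on $X$.

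Next I would show $\mu$ is $U_H^+(A)$-invariant: for $u\in U_H^+(A)$ one has $u\,a(t)u_k(\varphi(s))x = a(t)u_k(\psi_t+\varphi(s))x$ with $\psi_t := u_k\inv(a(-t)ua(t))\to\vect 0$ as $t\to+\infty$ (because $\{a(t):t>0\}$ expands $U_H^+(A)$), and since $\varphi$ is continuous the reparametrized curves converge uniformly, whence $u_*\mu=\mu$. Thus $\mu$ is invariant under a nontrivial unipotent subgroup of $G$, and by Ratner's measure classification theorem together with the linearization technique of Dani--Margulis and Mozes--Shah, either $\mu=\mu_G$ --- in which case every weak-$*$ subsequential limit of $\{\mu_t\}$ equals $\mu_G$ and the theorem follows --- or there is a proper closed connected subgroup $\Sigma$ in the countable Mozes--Shah collection $\mathcal H$ (so that $\Sigma\cap g\Gamma g\inv$ is a lattice in $\Sigma$) whose associated singular set carries positive $\mu$-mass.

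In the latter case I would fix a nonzero vector $p_\Sigma\in V:=\bigwedge^{\dim\Sigma}\mathrm{Lie}(G)$ representing $\mathrm{Lie}(\Sigma)$, set $\rho:=\bigwedge^{\dim\Sigma}\Ad$ and $v_0:=\rho(g)p_\Sigma$, and invoke the basic lemma of Dani--Margulis, which converts positive $\mu$-mass near the singular set into the following: for a positive-measure set of $s\in I$ the orbit $\{\rho(a(t))\,\rho(u_k(\varphi(s)))v_0 : t>0\}$ stays in a fixed bounded subset of $V$. Decomposing $V$ into the joint weight spaces of the maximal $\R$-split torus $B^k\subset H$, a weight $\vect\lambda=(\lambda_1,\dots,\lambda_k)$ is strictly expanded by $\{a(t):t>0\}$ exactly when $\eta_1\lambda_1+\cdots+\eta_k\lambda_k>0$; boundedness of the orbit forces the $\vect\lambda$-component of $w(s):=\rho(u_k(\varphi(s)))v_0$ to vanish for every such $\vect\lambda$, and since $w$ is polynomial in $\varphi(s)$ and $\varphi$ is analytic these identities hold for all $s\in I$. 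I would then analyze the commuting unipotent actions of the factors $u(\vect{x}_i)$ on $v_0$: each raises the $i$-th weight, and the polynomial coefficients that are forced to vanish have leading behaviour governed by the quantities $\norm{\vect{x}_1}_2^2,\dots,\norm{\vect{x}_k}_2^2$; dividing these out --- i.e.\ replacing $\varphi_i(s)$ by $\varphi_i(s)\inv$ --- the vanishing identities become linear and assert that some fixed nonzero linear functional on $(\R^{n-1})^k$ annihilates every difference $(\varphi(s_1)-\varphi(s_0))\inv-(\varphi(s_2)-\varphi(s_0))\inv$, $s_1,s_2\in I$, contradicting the non-degeneracy hypothesis. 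Making this weight bookkeeping precise --- uniformly over all $\Sigma\in\mathcal H$, over the full exterior-power representation of $H=(\SO(n,1))^k$, and with careful attention to how the distinct rates $\eta_1,\dots,\eta_k$ single out the expanding weights --- is the technical heart of the proof and the step I expect to be the main obstacle; it is precisely the ``study of linear representations of $\SO(n,1)$ and $H$'' announced in the abstract.

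Therefore no proper $\Sigma\in\mathcal H$ can carry positive $\mu$-mass, so $\mu$ assigns zero mass to every singular set and Ratner's theorem forces $\mu=\mu_G$. Since every weak-$*$ subsequential limit of $\{\mu_t\}_{t>0}$ equals $\mu_G$, we conclude $\mu_t\to\mu_G$ as $t\to+\infty$, which is exactly the asserted equidistribution.
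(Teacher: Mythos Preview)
Your claim in the second paragraph that any weak-$*$ limit $\mu$ is invariant under the \emph{full} expanding horospherical subgroup $U_H^+(A)$ is incorrect, and this is a genuine gap. You write $u\,a(t)u_k(\varphi(s))x = a(t)u_k(\psi_t+\varphi(s))x$ with $\psi_t\to\vect 0$ and conclude that ``the reparametrized curves converge uniformly''. But $s\mapsto\psi_t+\varphi(s)$ is a translate of the image of $\varphi$ in $(\R^{n-1})^k$, not a reparametrization; for a generic $u$ the shift $\psi_t$ is transverse to the one-dimensional image of $\varphi$, so the translated curve is not close to the original after applying $a(t)$ (indeed $a(t)$ blows $\psi_t$ back up to the fixed element $u$, so the two curves differ by left translation by $u$ for every $t$). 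The paper handles this by first modifying $\mu_t$ to $\lambda_t$ via a centralizer-valued curve $z(s)$ that straightens the tangent direction to the constant vector $\mathfrak e$ (Definition~\ref{def:lambda-t}); then a parameter shift $s\mapsto s+re^{-2\eta t}$ genuinely approximates left translation by $u_k(r\mathfrak e_{k_1})$, and one obtains invariance only under the one-parameter group $W=W_{k_1}$ (Proposition~\ref{prop:unipotent-invariance}).

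This weaker invariance has two downstream consequences that your sketch does not address and that are the real content of the paper. First, since the $\eta_i$ are distinct, $A$ and $u_k(\varphi(s))$ do not sit in a common $\SL(2,\R)$-copy, so the $\SL(2,\R)$ basic lemma cannot be applied directly to the obstruction vector; the paper introduces a ``perturbation of the diagonal subgroup'' argument (Lemma~\ref{lemma:perturbate-diagonal-subgroup}, Corollary~\ref{cor:perturbate-diagonal-subgroup}) that iteratively lowers the top eigenvalues until all $\eta_i$ coincide, at which point Lemma~\ref{lemma:sl2-representations} applies and yields that the obstruction vector is $H$-fixed. Second, even once $v$ is $H$-fixed, the proof is not over: because $H=(\SO(n,1))^k$ is not simple, $H$-fixedness only forces the obstructing subgroup $L$ to be \emph{normal} in $G$, not equal to $G$. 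The paper then passes to the quotient $G/L$, shows the limit is invariant under the strictly larger group $W_{k_2}$ (Proposition~\ref{prop:normal-subgroup}), and iterates until $W_k$ is reached, at which point the normal subgroup must contain $H$ and hence equal $G$. Your proposed weight-bookkeeping, even if made precise, would at best reach the conclusion ``$v$ is $H$-fixed'' and would still need this second mechanism.
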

 \begin{remark}
 \label{rmk:sl2}
 Since $\SO(2,1)$ is isomorphic to $\SL(2,\R)/\pm \I_2$, the above theorem holds for $H = (\SL(2,\R))^k$.
 \end{remark}

\subsection{Related results}
\label{subsec-related-results}
\par In \cite{Shah_1} and \cite{Shah_3}, the case where $G = \SO(m,1)$, $H = \SO(n,1)$ ($m \geq n \geq 2$) and $A =\{a(t): t \in \R\}$ is a maximal $\R$-split Cartan subgroup of $H$ is studied. It is proved that if $\phi : I \to H$ satisfies a geometric condition, then the equidistribution result holds. This result is generalized by the author \cite{Yang_2} to the case where $H = \SO(n,1)$ and $G$ is any Lie group containing $H$. In \cite{Shah_2} and \cite{Shah2010} the case where $H = \SL(n+1, \R)$ and $G$ is any Lie group containing $H$ is studied. In \cite{Shah_2}, the equidistribution result is established for the following singular diagonal subgroup:
\[A = \left\{a(t) := \begin{bmatrix} e^{nt} &  \\ & e^{-t} \I_n \end{bmatrix} : t \in \R \right\}.\]
In \cite{Shah2010}, the equidistribution result is established for more general diagonal subgroups. Both \cite{Shah_2} and \cite{Shah2010} give interesting applications to Diophantine approximation. Another special case in this direction is later studied by the author \cite{Yang_1} where the equidistribution is established when $H = \SL(2n, \R)$, $G$ is any Lie group containing $H$, and
\[A = \left\{a(t):= \begin{bmatrix} e^t \I_n & \\ & e^{-t} \I_n \end{bmatrix} : t \in \R\right\}.\]
Recently, Shah and Yang \cite{shah_yang} establish the equidistribution result for $H = \SL(m+n,\R)$,
 $$A = \left\{ a(t):= \begin{bmatrix} e^{nt} \I_m & \\ & e^{-mt} \I_n \end{bmatrix} : t \in \R \right\},$$
 and general Lie group $G$. 
Similar to \cite{Shah_2} and \cite{Shah2010}, \cite{Yang_1} and \cite{shah_yang} also have applications to Diophantine approximation. We refer the reader to \cite{Dani}, \cite{Klein_Mar} and \cite{Klein_Weiss} for the connection between Diophantine approximation and homogeneous dynamics.
\par Throughout this paper, we fix two positive integers $n\geq 2$ and $k \geq 2$, and a maximal $\R$-split Cartan subgroup $B = \{b(t) : t \in \R\}$ in $\SO(n,1)$.


\subsection{Outline of the proof}
\label{subsection:outline-of-the-proof} We will give the outline of the proof of Theorem \ref{thm:main-result} and explain the main difficulties. 
\par For $t>0$, let $\mu_t$ denote the normalized parametric measure on the curve $a(t)u_k(\varphi(I))x$, namely, for $f \in C_c (X)$, 
$$\int f \dd \mu_t : = \frac{1}{b-a}\int_a^b f(a(t)u_k(\varphi(s))x) \dd s . $$
Our goal is to show that $\mu_t \rightarrow \mu_G$ as $t \rightarrow +\infty$. Following some argument developed by Shah \cite{Shah_1} and \cite{Shah_2}, we modify $\mu_t$ to another probability measure $\lambda_t$, and show that $\lambda_t \rightarrow \mu_G$ implies $\mu_t \rightarrow \mu$ (as $t \rightarrow +\infty$). Then we prove that any accumulation point $\lambda_{\infty}$ of $\{\lambda_t : t >0\}$ is a probability measure on $X$ invariant under the action of a unipotent subgroup $W$ of $H$. To show that $\lambda_{\infty}$ is still a probability measure on $X$, one needs to apply a result of Kleinbock and Margulis \cite{Klein_Mar} and the linearization technique. Then we can apply the Ratner's theorem and the linearization technique to show that if $\lambda_{\infty} \neq \mu_G$, then for a particular linear representation $V$ of $H$, there exists a nonzero vector $v \in V$ such that for all $s \in I$, $a(t)u_k(\varphi(s))v$ converges as $t \rightarrow +\infty$. Since $A$ is a diagonal subgroup, one can decompose $V$ as the direct sum of eigenspaces of $A$:
 $$ V = \bigoplus_{\omega} V^{\omega}(A),$$
 where $V^{\omega}(A) = \{v \in V: a(t)v = e^{\omega t} v\}$. Let us define $V^{-0}(A) := \bigoplus_{\omega \leq 0} V^{\omega}(A) $, then the last conclusion in the paragraph above is equivalent to 
 \begin{equation}
 \label{eq:linear-dynamical-condition}
 \{u_k(\varphi(s))v\}_{s \in I} \subset V^{-0}(A).
 \end{equation} 
 Until now, the argument is standard and more or less the same as that in \cite{Shah_1}, \cite{Shah_2}, \cite{Shah2010}, \cite{Yang_2} and \cite{Yang_1}.
 \par Our main task is to show that if $\varphi$ satisfies the geometric condition given in Theorem \ref{thm:main-result}, and the linear dynamical condition \eqref{eq:linear-dynamical-condition}, then $v$ is fixed by the whole group $G$.  Let us fix $s_0 \in I$. By replacing $\varphi(s)$ by $\varphi(s) - \varphi(s_0)$ and $v$ by $u_k(\varphi(s_0))v$, we may assume that $\varphi(s_0) = \vect{0}$ and $v \in V^{-0}(A)$. When $\eta_1 = \eta_2 = \cdots = \eta_k$, then for any $s \in I \setminus \{s_0\}$, one can embed $A$ and $u_k(\varphi(s))$ into an $\SL(2,\R)$ copy in $H$ such that $a(t)$ corresponds to $\begin{bmatrix}e^t & \\ & e^{-t}\end{bmatrix}$ and $u_k(\varphi(s))$ corresponds to $\begin{bmatrix} 1 & 1 \\ 0 & 1\end{bmatrix}$. Then we can apply the basic lemma proved by the author \cite[Lemma 5.1]{Yang_2} on $\SL(2,\R)$ representations and the argument developed in \cite{Yang_2} and \cite{Yang_1} to show that the action of $G$ on $v$ is trivial. For general $\eta_1, \eta_2, \dots, \eta_k$, this argument does not work since we can not embed $A$ and $u_k(\varphi(s))$ into an $\SL(2, \R)$ copy in $H$. This is the main difficulty to complete the proof and we need a new idea to overcome the difficulty. 
 \par The new idea we will develop in this paper is called {\em perturbation of the diagonal subgroup}. The main idea goes as follows. We want to show that under some geometric condition on $\varphi$, if $u_k(\varphi(s))v \in V^{-0}(A)$ for all $s \in I$, then $u_k(\varphi(s))v \in V^{-0}(A')$ for all $s \in I$, where $A'$ is another diagonal subgroup such that $A'$ and $u_k(\varphi(s))$ can be embedded into an $\SL(2, \R)$ copy in $H$. This allows us to apply the previous argument to show that $v$ is fixed by $G$.
 \par If $H$ is simple, then by a standard argument, one can conclude Theorem \ref{thm:main-result}. However, in our case, $H = (\SO(n,1))^k$ is not simple, so we could not get the equidistribution result. This turns out to be the second difficulty of the proof. We then proceed as follows. By the linearization technique, $G v = v$ implies that every $W$-ergodic component of $\lambda_{\infty}$ is of form $g'\mu_{G_1}$ where $G_1$ is a normal subgroup of $G_1$ such that $G_1\cap \Gamma$ is a lattice of $G_1$, and $\mu_{G_1}$ denotes the unique $G_1$ invariant probability measure on $G_1\Gamma$ induced by the Haar measure on $G_1$. Moreover, $H_1 = G_1 \cap H$ is a normal subgroup of $H$ containing $W$. We then consider the quotient group $ G(1) := G/G_1$ with the lattice $\Gamma(1) : = \Gamma/ G_1\cap \Gamma$. Let $X(1):= G(1)/\Gamma(1)$. Then we can push each $\lambda_t$ to a probability measure $\lambda_{t}(1)$ on $X(1)$, and show that any accumulation point $\lambda_{\infty}(1)$ of $\{\lambda_{t}(1): t >0\}$ is a probability measure on $X(1)$ invariant under a unipotent subgroup $W(1)$ of $H(1) = H/H_1$. By pulling $W(1)$ back to the original group $H$, we can deduce that any accumulation point $\lambda_{\infty}$ of $\{\lambda_t: t >0\}$ is invariant under another unipotent subgroup $W'$ of $H$. By applying the linearization technique with $W$ replaced by $W'$, we can get a strictly larger normal subgroup $G'_1$ with the properties as above. Repeating this process, we can finally get that $G_1 = G$, which implies Theorem \ref{thm:main-result}. 
 \subsection{Organization of the paper}
 \label{subsection:organization-of-the-paper}
 The paper is organized as follows. In \S \ref{section:unipotent-invariance-non-divergence}, as explained above, we will modify $\mu_t$ to $\lambda_t$, and show that any accumulation point $\lambda_{\infty}$ of $\{\lambda_t : t > 0\}$ is a probability measure on $X$ invariant under the action of some unipotent subgroup $W \subset H$. In \S \ref{section:ratner-thm-linearization-technique}, we will apply the Ratner's theorem and the linearization technique to deduce the linear dynamical condition described above. In \S \ref{section:basic-lemmas}, we will prove a basic lemma that allows us to proceed the {\em perturbation of the diagonal subgroup} described as above. In \S \ref{section:normal-subgroups}, we will study the case that every $W$-ergodic component of $\lambda_{\infty}$ is induced by the Haar measure on some normal subgroup $G_1$ of $G$ with closed orbit. In \S \ref{section:conclusion}, we will finish the proof of Theorem \ref{thm:main-result}.
 \begin{notation}
 Throughout this paper, we will use the following notation. 
 \par For $\epsilon > 0$ small, 
 and two quantities $A$ and $B$, $A \overset{\epsilon}{\approx} B$ means that 
 $|A-B| \leq \epsilon$. Fix a right $G$-invariant metric $d(\cdot, \cdot)$ on $G$, then for $x_1, x_2 \in X = G/\Gamma$,
 and $\epsilon >0$, $x_1 \overset{\epsilon}{\approx} x_2$ means $x_2 =g x_1$ such that $d(g,e)< \epsilon$. 
 \par Given some quantity $A >0$, $O(A)$ denotes some quantity $B$ such that $|B| \leq C A$ for some contant $C >0$. 
 \end{notation}
 \subsection*{Acknowledgements} 
 The author thanks Professor Nimish Shah for suggesting this problem to him.

\section{Unipotent invariance and non-divergence of limit measures}
\label{section:unipotent-invariance-non-divergence}

\subsection{Preliminaries on Lie group structures}
\label{subsection:preliminaries}
We first recall some basic facts of the groups $\SO(n,1)$ and $H = (\SO(n,1))^k$. We may realize $\SO(n,1)$ as the group of $n+1$ by $n+1$ matrices with determinant one and preserving the quadratic form $Q$ in $n+1$ real variables defined as follows:
$$Q(x_0, x_1, \dots, x_n) := 2x_0 x_n - (x_1^2 + \cdots + x_{n-1}^2).$$
Let us fix the maximal $\R$ split Cartan subgroup $B=\{b(t): t \in \R\}$ as follows:
$$b(t) : = \begin{bmatrix}e^{2t} & & \\ & \I_{n-1} & \\ & & e^{-2t}\end{bmatrix}.$$
Let us denote $B^+ = \{b(t): t >0\}$. The expanding horospherical subgroup $U^+(B)$ of $\SO(n,1)$ with respect to $B$ is isomorphic to $\R^{n-1}$ with isomorphism $u: \R^{n-1} \rightarrow U^+ (B)$ defined as follows:
$$u(\vect{x}) := \begin{bmatrix} 1 & x_1  & \cdots & x_{n-1} &  \|\vect{x}\|^2_2 \\ & 1 &  & & 2 x_1 \\ & & \ddots & & \vdots \\  &  & & 1 & 2 x_{n-1} \\ &  & & & 1 \end{bmatrix}$$
for $\vect{x} = (x_1, x_2, \dots, x_{n-1}) \in \R^{n-1}$. Similarly the contracting horospherical subgroup $U^-(B)$ of $\SO(n,1)$ with respect to $B$ is isomorphic to $\R^{n-1}$ with isomorphism $u^-: \R^{n-1} \rightarrow U^-(B)$ defined as follows:
$$u^- (\vect{x}) := \begin{bmatrix}1 & & & &  \\ 2 x_1 & 1 & & &   \\ \vdots & & \ddots & & \\ 2 x_{n-1}  & & & 1 & \\ \|\vect{x}\|^2_2 & x_1 & \cdots  & x_{n-1} & 1 \end{bmatrix}$$
for $\vect{x} = (x_1, x_2, \dots, x_{n-1})\in \R^{n-1}$.
\par Let $Z(B) \subset \SO(n,1)$ denote the centralizer of $B$ in $\SO(n,1)$. We may choose a maximal compact subgroup $K \subset \SO(n,1)$ isomorphic to $\SO(n)$ such that $M := Z(B)\cap K $ is of form as follows:
$$M = \left\{ m := \begin{bmatrix}1 & & \\ & k(m) & \\ & & 1 \end{bmatrix}: k(m) \in \SO(n-1) \right\}.$$
By identifying $m \in M$ with $k(m) \in \SO(n-1)$, we may identify $M$ with $\SO(n-1)$. We also have that $Z(B) = MB$. 
\par It is easy to check that for any $m \in M = \SO(n-1)$ and any $\vect{x}\in \R^{n-1}$, $m u(\vect{x}) m\inv = u(m\cdot \vect{x})$ and $m u^-(\vect{x}) m\inv = u^-(m \cdot \vect{x})$ where $m \cdot \vect{x}$ denotes the standard action of $\SO(n-1)$ on $\R^{n-1}$. For $t \in \R$, we have that $b(t)u(\vect{x}) b(-t) = u(e^{2t}\vect{x})$ and $b(t) u^-(\vect{x}) b(-t) = u^- (e^{-2t}\vect{x})$. Therefore, the conjugate action of $Z(B) = MB$ on $U^{+}(B)$ defines an action of $Z(B)$ on $\R^{n-1}$. Let us denote this action by $\Ad^+$. For the same reason, the conjugate action of $Z(B)$ on $U^-(B)$ defines an action of $Z(B)$ on $\R^{n-1}$. We denote this action by $\Ad^-$. It is easy to check that for $m b(t) \in Z(B)$ where $m \in \SO(n-1)$, and $\vect{x} \in \R^{n-1}$, $\Ad^+_{m b(t)} (\vect{x}) = e^{2t} m \cdot \vect{x}$ and $\Ad^-_{m b(t)} (\vect{x}) = e^{-2t} m \cdot \vect{x}$.
\par For any $\vect{x} \in \R^{n-1}\setminus \{\vect{0}\}$, there exists a $\SL(2,\R)$ copy in $\SO(n,1)$ containing $\{u(r \vect{x}): r \in \R\}$, $B$ and $\{u^-(r \vect{x}): r \in \R\}$. In this $\SL(2, \R)$ copy, $u(\vect{x})$ corresponds to $\begin{bmatrix}1 & 1 \\ 0 & 1 \end{bmatrix}$, $b(t)$ corresponds to $\begin{bmatrix}e^t & \\ & e^{-t}\end{bmatrix}$, and $u^-(\vect{x}\inv)$ corresponds to $\begin{bmatrix}1 & 0\\ 1 & 1 \end{bmatrix}$ (recall that for $\vect{x} \in \R^{n-1}$, $\vect{x}\inv := \frac{\vect{x}}{\|\vect{x}\|_2^2}$). Let us denote this $\SL(2, \R)$ copy by $\SL(2, \vect{x})$. Let us define
$$E := \begin{bmatrix} -1 &  \\ & \I_{n-2} \end{bmatrix},$$
and 
$$J(\vect{x}) := \begin{bmatrix} & & \|\vect{x}\|_2^2 \\ & E & \\ \|\vect{x}\|_2^{-2} & & \end{bmatrix}.$$
It is easy to check that $J(\vect{x}) \in \SL(2, \vect{x})$ and it corresponds to $\begin{bmatrix}0 & 1 \\ -1 & 0\end{bmatrix} \in \SL(2, \R)$.
\par Now let us have a look at $H = (\SO(n,1))^k$. Recall that 
$$A = \{a(t) = (b(\eta_1 t), b(\eta_2 t), \dots , b(\eta_k t)): t \in \R \}.$$
 Let $U^+(A) \subset H$ and $U^- (A) \subset H$ denote the expanding and contracting horospherical subgroups of $H$ with respect to $A$ respectively. Let us define:
 $$u_k(\vect{x}_1, \vect{x}_2, \dots, \vect{x}_k) := (u(\vect{x}_1), u(\vect{x}_2), \dots, u(\vect{x}_k))$$
 and 
 $$u^-_k(\vect{x}_1, \vect{x}_2, \dots, \vect{x}_k) := (u^-(\vect{x}_1), u^-(\vect{x}_2), \dots, u^-(\vect{x}_k))$$
 for $(\vect{x}_1, \vect{x}_2, \dots, \vect{x}_k)\in (\R^{n-1})^k$. It is easy to check that $u_k : (\R^{n-1})^k \rightarrow U^+(A)$ and $u^-_k : (\R^{n-1})^k \rightarrow U^-(A)$ both give isomorphisms. The centralizer of $A$ in $H$, $Z(A) = (\SO(n-1))^k \cdot B^k$. The conjugate action of $Z(A)$ on $U^+(A)$ (and $U^- (A)$, respectively) defines an action of $Z(A)$ on $(\R^{n-1})^k$ which we will denote by $\Ad^+$ (and $\Ad^-$, respectively). It is easy to check that for $\mathfrak{b} = (b(t_1), b(t_2) , \dots , b(t_k)) \in B^k$, $\mathfrak{m} = (m_1, m_2,\dots, m_k) \in (\SO(n-1))^k$, and $(\vect{x}_1, \vect{x}_2, \dots, \vect{x}_k) \in (\R^{n-1})^k$, 
 $$\Ad^+_{\mathfrak{b} \mathfrak{m}}(\vect{x}_1, \vect{x}_2, \dots, \vect{x}_k) = (e^{2t_1} m_1 \cdot \vect{x}_1, e^{2t_2} m_2 \cdot \vect{x}_2, \dots, e^{2 t_k} m_k \cdot \vect{x}_k),$$
 and 
 $$\Ad^-_{\mathfrak{b} \mathfrak{m}}(\vect{x}_1, \vect{x}_2, \dots, \vect{x}_k) = (e^{-2t_1} m_1 \cdot \vect{x}_1, e^{-2t_2} m_2 \cdot \vect{x}_2, \dots, e^{-2 t_k} m_k \cdot \vect{x}_k).$$
  For any $(\vect{x}_1, \vect{x}_2, \dots, \vect{x}_k) \in (\R^{n-1})^k$, $u(\vect{x}_1, \vect{x}_2, \dots, \vect{x}_k)$ can be embedded into an $\SL(2, \R)$ copy in $H$ such that in this copy, $u(\vect{x}_1, \vect{x}_2, \dots, \vect{x}_k)$ corresponds to $\begin{bmatrix}1 & 1 \\ 0 & 1\end{bmatrix}$, $d(t):= (b(t), b(t),\dots, b(t))$ corresponds to $\begin{bmatrix}e^t &  \\ & e^{-t}\end{bmatrix}$, and $u^{-}(\vect{x}_1\inv, \vect{x}_1\inv, \dots, \vect{x}_k\inv)$ corresponds to $\begin{bmatrix}1 & 0 \\ 1 & 1\end{bmatrix}$. Let us denote this $\SL(2, \R)$ copy by $\SL(2, \vect{x}_1, \vect{x}_2, \dots, \vect{x}_k)$. Note that
  $$J(\vect{x}_1, \vect{x}_2, \dots, \vect{x}_k) := (J(\vect{x}_1), J(\vect{x}_2), \dots , J(\vect{x}_k)) \in \SL(2, \vect{x}_1, \vect{x}_2, \dots, \vect{x}_k)$$ 
  and it corresponds to $\begin{bmatrix} 0 & 1 \\ -1 & 0\end{bmatrix}$ in $\SL(2, \R)$. 


\subsection{Unipotent invariance of limit measures}
\label{subsection:unipotent-invariance}
Let us put
 $$\varphi(s) = (\varphi^1(s), \varphi^2(s), \dots , \varphi^k(s))$$ 
 where $\varphi^i(s) \in \R^{n-1}$ denotes the $i$th component of $\varphi$. Recall that for $t >0$, $\mu_t$ denotes the normalized parametric measure on the curve $a(t)u_k(\varphi(I))x$, namely, for 
$f \in C_c(X)$, 
$$\int f \dd \mu_t = \frac{1}{|I|} \int_a^b f(a(t)u_k(\varphi(s))x) \dd s.$$
Given a subinterval $J \subset I$, let $\mu_t^J$ denote the normalized parametric measure on the curve $a(t)u_k(\varphi(J))x$. In this subsection, we will modify $\mu_t$'s and prove the unipotent invariance for the corresponding limit measures. 
\par Without loss of generality, we may assume that $\eta_1 \geq \eta_2 \geq \cdots \geq \eta_k >0$. Moreover, we may assume that $\eta = \eta_1 =  \cdots = \eta_{k_1} > \eta_{k_1 +1}$. Since for $s_0 \in I$, $(\varphi(s) -\varphi(s_0))\inv$ is defined for almost every $s \in I$, we have that $\varphi^i(s)$ is not constant for each $i=1,2,\dots, k$. Therefore, there is a finite set $S = \{s_1, s_2, \dots, s_l\} \subset I$, such that for any $s \in I\setminus S$, $(\varphi^i)^{(1)}(s) \neq \vect{0}$ for $i=1,2,\dots, k$. Here $(\varphi^i)^{(1)}$ denotes the derivative of $\varphi^i$. 
\begin{definition}
\label{def:lambda-t}
\par For any closed subinterval $J \subset I \setminus S$, we can choose an analytic curve $z: J \rightarrow Z(A)$ such that for any $s \in J$,
$\Ad^+_{z(s)} \varphi^{(1)}(s) = \mathfrak{e} := (\vect{e}_1 , \vect{e}_1, \dots, \vect{e}_1)$ where $\vect{e}_1 = (1,0,\dots, 0) \in \R^{n-1}$.
For any closed subinterval $J \subset I \setminus S$ and $t >0$, we define $\lambda^J_t$ to be the normalized parametric measure on the curve $\{z(s) u_k(\varphi(s)) x : s \in J\}$; namely, for $f \in C_c(X)$,
$$\int f \dd \lambda_t^J := \frac{1}{|J|} \int_{s\in J} f(z(s)u_k(\varphi(s))x) \dd s .$$
\end{definition}
\begin{remark}
\label{rmk:lambda-t}
The definition of $\lambda^J_t$ is due to Nimish Shah \cite{Shah_1} and \cite{Shah_2}.
\end{remark}
\begin{proposition}
\label{prop:lambda-to-mu}
Suppose for any closed subinterval $J \subset I \setminus S$, $\lambda^J_t \rightarrow \mu_G$ as $t \rightarrow +\infty$, then 
$\mu_t \rightarrow \mu_G$ as $t \rightarrow +\infty$.
\end{proposition}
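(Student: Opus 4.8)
The plan is to reduce the convergence of $\mu_t$ to that of the modified measures $\lambda^J_t$ by a covering and approximation argument, exploiting that $z(s)$ is a bounded perturbation and that the curve $\varphi$ spends almost all its time away from the finite bad set $S$. First I would observe that for any $\delta > 0$ the interval $I$ can be written as a finite union of closed subintervals $J_1, \dots, J_m \subset I \setminus S$ together with a small neighborhood $N_\delta$ of $S$ whose total length is at most $\delta$; by additivity of the parametric measure, $\mu_t = \sum_i \frac{|J_i|}{|I|}\mu_t^{J_i} + O(\delta)$ in the weak-$*$ sense (uniformly in $t$, since $\|f\|_\infty$ controls the contribution of $N_\delta$). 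So it suffices to show $\mu_t^J \to \mu_G$ for each fixed closed $J \subset I \setminus S$.

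Next I would compare $\mu_t^J$ with $\lambda_t^J$ on such a fixed $J$. The key point is that $\lambda_t^J$ is the pushforward of the parametric measure on $\{z(s)u_k(\varphi(s))x : s \in J\}$, whereas $\mu_t^J$ uses $a(t)u_k(\varphi(s))x$; the two are related by inserting $a(t)$ and noting $z(s) \in Z(A)$ commutes with $a(t)$. Write $a(t)u_k(\varphi(s))x = a(t)z(s)\inv \big(z(s)u_k(\varphi(s))x\big) = z(s)\inv a(t)\big(z(s)u_k(\varphi(s))x\big)$. Thus $a(t)\mu_t^J$-type quantities differ from $a(t)\lambda_t^J$-type quantities by the action of the bounded family $\{z(s)\inv : s \in J\}$, which lies in a fixed compact subset $Z_J$ of $Z(A)$ (here compactness uses that $J$ is a closed subinterval of $I\setminus S$ so $\varphi^{(1)}$ is bounded away from the bad locus and $z$ is analytic hence continuous on $J$). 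More precisely, for $f \in C_c(X)$ and any $\epsilon > 0$, uniform continuity of $f$ gives a $\delta' > 0$ such that $d(g,e) < \delta'$ implies $|f(gy) - f(y)| < \epsilon$ for all $y$; but here the perturbation $z(s)\inv$ does not go to $e$, so instead I would use the standard trick of partitioning $J$ into finitely many pieces on each of which $z(s)$ is within $\delta'$ of a constant $z_j \in Z(A)$, reducing to comparing $a(t)\lambda_t^{J_j}$ pushed by $z_j\inv$, and then using that if $\lambda_t^{J_j} \to \mu_G$ then its pushforward under the fixed element $z_j\inv$ also tends to $\mu_G$ because $\mu_G$ is $G$-invariant. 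Combined with the hypothesis $\lambda_t^J \to \mu_G$ for all such $J$ (which passes to all subintervals, since a limit of $\lambda_t^J$ restricted to a subinterval of full relative measure is again controlled), this yields $\mu_t^J \to \mu_G$.

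Assembling the pieces: given $f \in C_c(X)$ and $\epsilon > 0$, choose $\delta$ so that the $N_\delta$-contribution is at most $\epsilon$, choose the finite partition of $I \setminus N_\delta$ into closed subintervals on each of which $z$ is nearly constant, apply the hypothesis on each piece to get $t$ large enough that each $\lambda_t^{J_j}(f \circ (\text{left translation})) $ is within $\epsilon$ of $\int f \, d\mu_G$, and sum up with the weights $|J_j|/|I|$ which total at least $1 - \delta$. Letting $\epsilon, \delta \to 0$ gives $\mu_t(f) \to \mu_G(f)$.

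The main obstacle is the bookkeeping in the second step: the perturbation $z(s)$ is genuinely $s$-dependent and not small, so one cannot simply say "$\mu_t^J$ and $\lambda_t^J$ have the same limit" — one must carefully localize in $s$, replace $z(s)$ by nearby constants, and invoke $G$-invariance of $\mu_G$ to absorb those constants, all while keeping the error terms uniform in $t$. I expect this is exactly the argument Shah employs in \cite{Shah_1} and \cite{Shah_2}, and the technical content is in checking that the family $\{z(s) : s \in J\}$ is precompact in $Z(A)$ for $J$ a closed subinterval of $I \setminus S$, which follows because on such $J$ the derivative $\varphi^{(1)}(s)$ has all components bounded away from $\vect{0}$, so the element of $Z(A) = (\SO(n-1))^k B^k$ needed to rotate-and-scale $\varphi^{(1)}(s)$ to the fixed vector $\mathfrak{e}$ varies continuously and stays bounded.
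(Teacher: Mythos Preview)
Your proposal is correct and follows essentially the same approach as the paper: excise small neighborhoods of the finite bad set $S$, partition the remaining closed intervals into pieces on which $z(s)$ is nearly a constant $z_j$, and on each piece compare $\int f\,d\mu_t^{J_j}$ with $\int (f\circ z_j^{-1})\,d\lambda_t^{J_j}$, then use the hypothesis $\lambda_t^{J_j}\to\mu_G$ together with the $G$-invariance of $\mu_G$ to absorb $z_j^{-1}$. The paper carries out exactly this argument (defining $f_0(x)=f(z(s_0)^{-1}x)$ on each piece and summing), and your identification of the key technical point---that the $s$-dependent factor $z(s)$ must be localized and absorbed via $G$-invariance rather than ignored---matches the paper's treatment precisely.
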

\begin{proof}
For any fixed $f \in C_c(G/\Gamma)$ and $\epsilon >0$, we want to show that for $t >0$ large enough, 
   $$\int f \dd \mu_t \overset{4 \epsilon}{ \approx} \int_{X} f \dd \mu_G.$$
 For each $s' \in S$, one can choose an open subinterval $B_{s'} \subset I$ containing $s'$, such that 
 \begin{equation}
 \label{equ:bs-1}
 \left| (\sum_{s'\in S} |B_{s'}|) \int_{X} f \dd \mu_G \right| \leq \epsilon |I|,
 \end{equation}
 and for any $t >0$,
 \begin{equation}
 \label{equ:bs-2}
 \left| \int_{\cup_{s'\in S} B_{s'}} f(a(t) u_k (\varphi(s)) x) \dd s \right| \leq \epsilon |I|.
 \end{equation}
 Since $f$ is uniformly continuous, there exists a constant $\delta>0$, such that if $x_1 \overset{\delta}{\approx} x_2$
 then $f(x_1) \overset{\epsilon}{\approx} f(x_2)$.
 \par We cut $I\setminus \cup_{s'\in S} B_{s'}$ into several small closed subintervals $J_1, J_2, \dots, J_p$, such that, for every $J_r$, $z^{-1}(s_1)z(s_2) \overset{\delta}{\approx} e$ for any $s_1,s_2 \in J_r$. 
 \par Now for a fixed $J_r \subset I\setminus \cup_{s'\in S} B_{s'}$, we choose $s_0 \in J_r$ and define $f_0(x)= f(z^{-1}(s_0)x)$. Then for any $s \in J_r$, because 
 $z^{-1}(s_0)z(s)a(t)u_k(\varphi(s))x \overset{\delta}{\approx} a(t)u_k(\varphi(s))x$, we have
 $$f_0(z(s)a(t)u_k(\varphi(s))x) = f(z^{-1}(s_0)z(s)a(t)u_k(\varphi(s))x) \overset{\epsilon}{\approx} f(a(t)u_k(\varphi(s))x).$$
 Therefore 
 $$\int f_0 \dd \lambda^{J_r}_t \overset{\epsilon}{\approx} \int f \dd \mu^{J_r}_t.$$
 Because $\int f_0 \dd \lambda^{J_r}_t \rightarrow \int_{X} f_0(x) \dd\mu_G(x)$ as $t \rightarrow +\infty$, and 
 $\int_{X} f_0(x) \dd\mu_G(x) = \int_{X}f(z^{-1}(s_0)x) \dd \mu_{G}(x) = \int_{X} f(x) \dd\mu_G$ (because $\mu_G$ is
 $G$-invariant), we have that there exists a constant $T_r>0$, such that for $t> T_r$, 
 $$\int f_0 \dd\lambda^{J_r}_t \overset{\epsilon}{\approx} \int_{X} f \dd\mu_G.$$
 Therefore, for $t > T_r$,
 $$\int f \dd\mu^{J_r}_t \overset{2\epsilon}{\approx} \int_{X} f \dd\mu_G,$$
i.e.,
$$\int_{J_r} f(a(t)u_k(\varphi(s)) x ) \dd s \overset{2 \epsilon |J_r|}{ \approx} |J_r| \int_{X} f \dd \mu_G .$$
 Then for $t > \max_{1\leq r\leq p} T_r$, we can sum up the above approximations for $r=1,2,\dots, p$ and get 
 $$\int_{I \setminus \cup_{s' \in S} B_{s'} } f(a(t) u_k(\varphi(s))x) \dd s \overset{2\epsilon |I|}{\approx} (|I| - \sum_{s' \in S} |B_{s'}|)\int_{X} f \dd \mu_G.$$
Combined with \eqref{equ:bs-1} and \eqref{equ:bs-2}, the above approximation implies that 
$$\int_I f(a(t) u_k(\varphi(s))x) \dd s \overset{4\epsilon |I|}{ \approx} |I| \int_{X} f \dd \mu_G ,$$
which is equivalent to 
$$\int f \dd \mu_t  \overset{4\epsilon}{\approx} \int_{X} f \dd \mu_G .$$
 Because $\epsilon>0$ can be arbitrarily small, the proof is completed.
\end{proof}
By Proposition \ref{prop:lambda-to-mu}, to prove $\mu_t \rightarrow \mu_G $ as $t \rightarrow +\infty$, it suffices to show that for any closed subinterval $J \subset I\setminus S$, $\lambda^{J}_t \rightarrow \mu_G$ as $t \rightarrow +\infty$. In particular, if we can prove the equidistribution of $\{\lambda_t := \lambda^I_t : t >0\}$ as $t \rightarrow +\infty$ assuming that for all $s\in I$, $(\varphi^i)^{(1)}(s) \neq \vect{0}$ for $i = 1,2,\dots, k$, then the equidistribution of $\{\mu_t : t >0\}$ as $t \rightarrow +\infty$ will follow. Therefore, later in this paper, we will assume that for all $s \in I$, $(\varphi^i)^{(1)}(s) \neq \vect{0}$ for $i = 1,2,\dots, k$, and define $\lambda_t$ to be the normalised Lebesgue measure on the curve $\{z(s)a(t)u_k(\varphi(s))x: s \in I\}$. Our goal is to show that $\lambda_t \rightarrow \mu_G$ as $t \rightarrow +\infty$.
\par For $k' \leq k$, let $\mathfrak{e}_{k'} = (\vect{e}_1, \dots, \vect{e}_1, \vect{0}, \dots, \vect{0})$ where the first $k'$ components are $\vect{e}_1$ and the rest are $\vect{0}$. Let 
\begin{equation} 
\label{eq:W}
W_{k'}:= \{u_k(r\mathfrak{e}_{k'}): r \in \R\}.
\end{equation}
 We will show that any limit measure of $\{\lambda_t : t >0\}$ is invariant under the unipotent subgroup $W_{k_1}$.

\begin{proposition}[See \cite{Shah_1}]
 \label{prop:unipotent-invariance}
 Let $t_i \rightarrow +\infty$ be a sequence such that $\lambda_{t_i} \rightarrow \lambda_{\infty}$ in weak-$\ast$ topology, then 
 $\lambda_{\infty}$ is invariant under the $W_{k_1}$-action.
\end{proposition}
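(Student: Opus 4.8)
The plan is to follow the now-standard scheme for proving unipotent invariance of limit measures of parametrized expanding translates, originating in Shah's work. The key point is that the curve $s\mapsto z(s)a(t)u_k(\varphi(s))x$ has been arranged (via the choice of $z(s)\in Z(A)$) so that, after conjugating by $a(t)$, its velocity in the expanding horospherical direction is, to leading order, a fixed vector $\mathfrak e_{k_1}$ lying in the relevant eigenspace. Concretely, I would compute $z(s)a(t)u_k(\varphi(s)) = a(t)\,\bigl(a(-t)z(s)a(t)\bigr)\,u_k(\varphi(s))$, and analyze the curve $c_t(s):=\bigl(a(-t)z(s)a(t)\bigr)u_k(\varphi(s))$ in $H$ as $t\to+\infty$.

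First I would decompose the $Z(A)$-valued factor using $Z(A)=(\SO(n-1))^k B^k$, writing $z(s)=\mathfrak m(s)\mathfrak b(s)$. Conjugation by $a(-t)$ fixes the compact part $\mathfrak m(s)$ and the $B^k$ part $\mathfrak b(s)$ (both centralize $A$), so $a(-t)z(s)a(t)=z(s)$; the expansion happens when we move $z(s)$ past $u_k(\varphi(s))$. The right way to organize this is the classical trick: fix $s_0\in I$ and a nearby $s$, and study $u_k(\varphi(s_0))^{-1}z(s_0)^{-1}z(s)u_k(\varphi(s))$ conjugated by $a(t)$. Using the identities recorded in \S\ref{subsection:preliminaries} — namely $b(t)u(\vect x)b(-t)=u(e^{2t}\vect x)$ and the formula for $\Ad^+_{z(s)}$ — together with the defining property $\Ad^+_{z(s)}\varphi^{(1)}(s)=\mathfrak e$, a Taylor expansion of $\varphi$ around $s_0$ shows that, writing $s=s_0+e^{-2\eta t}r$ (matching the fastest-expanding block, of "width" $k_1$), the conjugated curve converges as $t\to+\infty$ to $u_k(r\,\mathfrak e_{k_1})\in W_{k_1}$, uniformly for $r$ in compact sets. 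The slower blocks $i>k_1$ contract faster under $a(-t)\cdot a(t)$ relative to the chosen time scale, so their contribution goes to $e$; this is exactly why the limiting invariance group is $W_{k_1}$ rather than all of $W_k$.

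Given this, the standard argument runs as follows. Fix $f\in C_c(X)$, $w=u_k(r_0\mathfrak e_{k_1})\in W_{k_1}$, and $\epsilon>0$; I want $\int w f\,\dd\lambda_\infty = \int f\,\dd\lambda_\infty$. Partition $I$ into $N$ subintervals $I_{j}$ of length $\asymp 1/N$; on each $I_j$ pick a base point $s_j$. On $I_j$ the curve $z(s)a(t)u_k(\varphi(s))x$ and its pushforward under $w$ are compared by the computation above: reparametrizing $s\mapsto s+e^{-2\eta t}r_0$ (a shift that, for $t$ large, moves each $I_j$ by an amount $\ll 1/N$ and hence changes the integral of the uniformly continuous $f$ by $o(1)$ plus an error controlled by overlap at the partition endpoints, handled by uniform continuity and boundedness of $f$) approximately intertwines the two families. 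Passing $t=t_i\to\infty$ and then $N\to\infty$ kills all error terms, yielding $\int wf\,\dd\lambda_\infty=\int f\,\dd\lambda_\infty$. Since $W_{k_1}$ is one-parameter, invariance under each such $w$ gives invariance under the whole group.

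The main obstacle is the uniform-convergence estimate in the second paragraph: one must show that the $B^k$-component $\mathfrak b(s)$ of $z(s)$ does not blow up along the curve (it is an analytic, hence bounded, function on the compact interval $I\setminus S$, but one has to see that normalizing $\Ad^+_{z(s)}\varphi^{(1)}(s)=\mathfrak e$ forces the scaling in block $i$ to be $\|\varphi^{(i)(1)}(s)\|_2^{-1}$, which is bounded away from $0$ and $\infty$ since $(\varphi^i)^{(1)}(s)\neq\vect 0$ throughout $I$), and then to track how the off-diagonal "width" in each block rescales under $a(-t)\cdot a(t)$ at the chosen time scale $e^{-2\eta t}$, confirming that block $1$ through $k_1$ survive while blocks $k_1+1,\dots,k$ die. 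This is a bookkeeping computation with the explicit matrices $u(\vect x)$, $b(t)$, but it is where the hypothesis $\eta=\eta_1=\cdots=\eta_{k_1}>\eta_{k_1+1}$ and the non-vanishing of $(\varphi^i)^{(1)}$ are genuinely used; everything else is the soft Shah-type limit argument.
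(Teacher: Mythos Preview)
Your proposal is correct and follows essentially the same approach as the paper: reparametrize $s\mapsto s+re^{-2\eta t}$, Taylor-expand $\varphi$, use the normalization $\Ad^+_{z(s)}\varphi^{(1)}(s)=\mathfrak e$ to turn the first-order term into $u_k(re^{-2\eta t}\mathfrak e)$, and observe that after conjugation by $a(t)$ only the first $k_1$ blocks survive (the others pick up a factor $e^{-2(\eta-\eta_i)t}\to 0$). The partition into $N$ subintervals and the subsequent $N\to\infty$ are unnecessary scaffolding --- the paper simply performs the change of variables globally over $I$, since the shift $re^{-2\eta t_i}\to 0$ and $z$, $\varphi$ are analytic on the compact interval, so the endpoint discrepancy and the approximation errors are handled directly by uniform continuity without any partitioning.
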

\begin{proof}
Given any $f\in C_c(X)$, and $r \in \R$, we want to show that 
$$\int f(u_k(r \mathfrak{e}_{k_1}) x) \dd \lambda_{\infty} = \int f(x) \dd \lambda_{\infty}.$$
Since $z(s)$ and $\varphi(s)$ are analytic and defined on closed interval $I = [a, b]$, there exists a constant $T_1 >0$ such that for $t \geq T_1$, $z(s)$ and $\varphi(s)$ can be extended to analytic curves defined on $[a - |r| e^{-2\eta t} , b + |r| e^{-2\eta t}]$. Throughout the proof, we always assume that $t_i \geq T_0$. Then $z(s + r e^{-2 \eta t_i})$ and $\varphi(s + r e^{-2\eta t_i})$ are both well defined for $s \in I$.
\par From the definition of $\lambda_{\infty}$ we have 
$$
\int f(u_k(r\mathfrak{e}_{k_1})x)\dd\lambda_{\infty}  =  \lim_{t_i \rightarrow +\infty} \frac{1}{|I|}\int_{s \in I} f(u_k(r\mathfrak{e}_{k_1})z(s)a(t_i)u_k(\varphi(s))x) \dd s.
$$
We want to show that 
$$u_k(r\mathfrak{e}_{k_1})z(s)a(t_i)u_k(\varphi(s)) \overset{O(e^{-2(\eta - \eta_{k_1+1} ) t_i})}{\approx} z(s+r e^{-2\eta t_i})a(t_i)u_k(\varphi(s+r e^{-2\eta t_i})).$$

Since $z(s+r e^{-2\eta t_i}) \overset{O(e^{-2\eta t_i})}{\approx} z(s)$ for $t_i$ large enough, it suffices to show that 
$$u(r\mathfrak{e}_{k_1})z(s)a(t_i)u_k(\varphi(s)) \overset{O(e^{-2(\eta - \eta_{k_1 + 1} ) t_i})}{\approx} z(s)a(t_i)u_k(\varphi(s+r e^{-2\eta t_i})).$$
In fact,
$$\begin{array}{cl}
   & z(s)a(t_i)u_k(\varphi(s+r e^{-2\eta t_i})) \\
   = & z(s) a(t_i)u_k(\varphi(s) + r e^{-2\eta t_i} \varphi^{(1)}(s) + O(e^{-4\eta t_i}) ) \\
   = &  u_k( O(e^{- 2\eta t_i}) ) a(t_i)z(s)u_k(r e^{-2 \eta t_i} \varphi^{(1)}(s)) u_k(\varphi(s)).
  \end{array}
$$
By the definition of $z(s)$, we have the above is equal to
$$\begin{array}{cl}  & u_k(O(e^{-2\eta t_i}))a(t_i)u_k(r e^{-2\eta t_i} \mathfrak{e})z(s)u_k(\varphi(s))  \\  = & u_k(O(e^{-2\eta t_i}))u_k(O(e^{-2(\eta - \eta_{k_1+1}) t_i}))u_k(r \mathfrak{e}_{k_1})z(s)a(t_i)u_k(\varphi(s)) \\  = & u_k(O(e^{-2(\eta -\eta_{k_1+1} ) t_i}))u_k(r \mathfrak{e}_{k_1})z(s)a(t_i)u_k(\varphi(s)) . \end{array}$$

 This shows that 
  
$$u_k(r\mathfrak{e}_{k_1} )z(s)a(t_i)u_k(\varphi(s)) \overset{O(e^{-2(\eta - \eta_{k_1} ) t_i})}{\approx} z(s+ r e^{-2\eta t_i})a(t_i)u_k(\varphi(s+r e^{-2\eta t_i})).$$

Therefore, for any $\delta >0$, there is some constant $T \geq T_1$, such that for $t_i \geq T$,

$$u_k(r\mathfrak{e}_{k_1} )z(s)a(t_i)u_k(\varphi(s)) \overset{\delta}{\approx} z(s+ r e^{-2\eta t_i})a(t_i)u_k(\varphi(s+r e^{-2\eta t_i})).$$

Given $\epsilon >0$, we choose $\delta>0$ such that whenever $x_1 \overset{\delta}{\approx} x_2$, we have that
$f(x_1) \overset{\epsilon}{\approx} f(x_2)$. Let $T>0$ be the constant as above. Then from the above argument, for $t_i > T$, we have
$$f(u_k(r \mathfrak{e}_{k_1} ) z(s) a(t_i)u_k(\varphi(s))x) \overset{\epsilon}{\approx} f(z(s+ r e^{-2\eta t_i}) a(t_i) u_k(\varphi(s+ r e^{-2\eta t_i}))x),$$
therefore,
$$\begin{array}{cl} & \frac{1}{|I|}\int_{s\in I} f(u_k(r \mathfrak{e}_{k_1} ) z(s) a(t_i)u_k(\varphi(s))x)\dd s \\
                  \overset{\epsilon}{\approx} & \frac{1}{|I|} \int_{s\in I} f(z(s+ r e^{-2\eta t_i}) a(t_i) u_k(\varphi(s+ r e^{-2\eta t_i}))x) \dd s \\
                  = & \frac{1}{|I|} \int_{a+ r e^{-2\eta t_i}}^{b+ r e^{-2\eta t_i}} f(z(s)a(t_i)u_k(\varphi(s))x) \dd s.
\end{array}$$

It is easy to see that when $t_i$ is large enough,
$$\frac{1}{|I|} \int_{a+ r e^{-2\eta t_i}}^{b+ r e^{-2\eta t_i}} f(z(s)a(t_i)u_k(\varphi(s))x) \dd s \overset{\epsilon}{\approx} \frac{1}{|I|} \int_{a}^{b} f(z(s)a(t_i)u_k(\varphi(s))x) \dd s.$$
Therefore, for $t_i$ large enough,
$$\int f(u_k(r \mathfrak{e}_{k_1})x) \dd \lambda_{t_i} \overset{2\epsilon}{\approx} \int f(x) \dd \lambda_{t_i}.$$
Letting $t_i \rightarrow +\infty$, we have 
$$\int f(u_k(r \mathfrak{e}_{k_1})x) \dd \mu_{\infty} \overset{2\epsilon}{\approx} \int f(x) \dd \mu_{\infty}.$$
Since the above approximation is true for arbitrary $\epsilon >0$, we have that 
$\mu_{\infty}$ is $W_{k_1}$-invariant.
\end{proof}
For simplicity, later in this paper, we will denote $W_{k_1}$ by $W$.
\subsection{Non-divergence of limit measures} 
\label{subsection:non-divergence}
We will prove that any limit measure $\lambda_{\infty}$ of $\{\lambda_t: t >0\}$ is still a probability measure on $X$. To do this, it suffices to prove the following proposition.
\begin{proposition}
\label{prop:non-divergence}
For any $\epsilon >0$, there exists a compact subset $\mathcal{K}_{\epsilon} \subset X$ such that 
$$\lambda_{t}(\mathcal{K}_{\epsilon}) \geq 1-\epsilon \text{ for all } t > 0. $$
\end{proposition}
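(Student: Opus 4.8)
The plan is to deduce Proposition~\ref{prop:non-divergence} from the quantitative non‑divergence estimate of Kleinbock and Margulis \cite{Klein_Mar} (extending the work of Dani and Margulis) for $(C,\alpha)$‑good trajectories in $G/\Gamma$, together with the linearization technique. First I would remove the twisting factor $z(s)$ by a soft compactness argument: since $z$ is analytic on the compact interval $I$, the set $\mathcal Z=\{z(s):s\in I\}$ is compact in $Z(A)$, so $\mathcal Z\mathcal K$ is compact in $X$ whenever $\mathcal K\subset X$ is, and $\{s\in I:z(s)a(t)u_k(\varphi(s))x\notin\mathcal Z\mathcal K\}\subset\{s\in I:a(t)u_k(\varphi(s))x\notin\mathcal K\}$. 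Hence it suffices to produce, for each $\epsilon>0$, a compact $\mathcal K_\epsilon\subset X$ with $\abs{\{s\in I:a(t)u_k(\varphi(s))x\notin\mathcal K_\epsilon\}}\le\epsilon\abs I$ for all $t>0$; equivalently, the family of push‑forwards $(a(t))_\ast\nu$ ($t>0$) of the parametric measure $\nu$ on the \emph{fixed} compact curve $u_k(\varphi(I))x$ should be tight. For $t$ in any bounded range $[0,T]$ the set $\{a(t)u_k(\varphi(s))x:s\in I,\ 0\le t\le T\}$ is compact, so only large $t$ is at issue.

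Write $x=g\Gamma$. The Dani--Margulis construction (valid for an arbitrary lattice; see \cite{Klein_Mar}) provides finitely many continuous representations $\rho_i:G\to\GL(V_i)$ and vectors $p_i\in V_i$ with $\Gamma p_i$ discrete, whose stabilizers are, up to finite index, the proper subgroups $L_i\subsetneq G$ with $L_i\cap g\Gamma g\inv$ a lattice in $L_i$, in such a way that relative compactness in $X$ is detected by the vectors $\rho_i(hg\gamma)p_i$ ($\gamma\in\Gamma$) staying bounded away from $\vect 0$. To invoke \cite{Klein_Mar} I need the $V_i$‑valued functions $s\mapsto\rho_i(a(t)u_k(\varphi(s))g\gamma)p_i$ to be $(C,\alpha)$‑good with $(C,\alpha)$ independent of $t$, $i$ and $\gamma$. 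The point is scale invariance: for a vector $v\in V_i$ the coordinates of $w(s):=\rho_i(u_k(\varphi(s)))v$ are polynomials of bounded degree in the analytic functions $\varphi^1,\dots,\varphi^k$, hence real‑analytic functions of $s$ lying in one fixed finite‑dimensional space $\mathcal F_i$ of such functions (independent of $v$); decomposing $w(s)=\sum_\omega w^\omega(s)$ into $A$‑weight components, $\rho_i(a(t))w(s)=\sum_\omega e^{c_\omega t}w^\omega(s)$, whose coordinates again lie in $\mathcal F_i$. Since the $(C,\alpha)$‑good property is unchanged under multiplication by a nonzero scalar, and a finite‑dimensional space of real‑analytic functions on a compact interval is uniformly $(C,\alpha)$‑good (cf.\ \cite{Klein_Mar}), the required uniformity follows. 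Applying \cite{Klein_Mar} then yields, for each $\epsilon>0$, a compact $\mathcal K_\epsilon\subset X$ and $\delta_\epsilon>0$ such that for every $t>0$ one of the following holds: either $\abs{\{s\in I:a(t)u_k(\varphi(s))x\notin\mathcal K_\epsilon\}}\le\epsilon\abs I$, or there are $i$ and $\gamma\in\Gamma$ with $\sup_{s\in I}\norm{\rho_i(a(t)u_k(\varphi(s))g\gamma)p_i}<\delta_\epsilon$.

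\textbf{The main obstacle} is to rule out the second alternative for large $t$. Here is how I would proceed. Suppose it held along a sequence $t_j\to+\infty$, and put $q:=\rho_i(g\gamma)p_i\ne\vect 0$. Comparing $A$‑weight components gives $\norm{(\rho_i(u_k(\varphi(s)))q)^\omega}\le\delta_\epsilon e^{-c_\omega t_j}$ for each weight $\omega$ and each $s\in I$; since the left‑hand side does not depend on $j$, letting $j\to\infty$ forces $\rho_i(u_k(\varphi(s)))q\in\bigoplus_{c_\omega\le 0}V_i^\omega(A)=V_i^{-0}(A)$ for all $s\in I$, i.e.\ the linear dynamical condition \eqref{eq:linear-dynamical-condition} holds with $v=q$, for a nonzero $q$ in the discrete set attached to the proper subgroup $L_i$. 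Now $U^+(A)$ is abelian and $u_k$ is a group isomorphism, so $u_k(\varphi(s_1))u_k(\varphi(s_2))\inv=u_k(\varphi(s_1)-\varphi(s_2))$; feeding this together with the non‑degeneracy hypothesis on $\varphi$ into the rigidity argument of \S\S\ref{section:basic-lemmas}--\ref{section:conclusion} — which, via the $\SL(2,\vect x_1,\dots,\vect x_k)$‑copies and the Weyl elements $J(\vect x_1,\dots,\vect x_k)$ of \S\ref{subsection:preliminaries}, converts \eqref{eq:linear-dynamical-condition} into $G$‑invariance of $v$ — would force $q$ to be fixed by all of $G$, which is incompatible with $q$ lying in the discrete set of the proper subgroup $L_i$ (equivalently, it contradicts the density of $Hx$ in $X$). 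Hence for $t$ large the first alternative must hold, and together with the trivial bounded‑$t$ case this gives the proposition. I expect this last step to be the substantive one: everything before it is a routine verification of the hypotheses of \cite{Klein_Mar} once the scale invariance of the $(C,\alpha)$‑good property is exploited. (As in \cite{Shah_1}, one may alternatively arrange this last step to use only the weaker non‑degeneracy that each $\varphi^i$ is non‑constant, which was already noted to hold.)
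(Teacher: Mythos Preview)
Your approach matches the paper's—reduce to the untwisted trajectory, invoke Kleinbock--Margulis (the paper's Theorem~\ref{thm:non-divergence}), and rule out the small-vector alternative via the representation theory of \S\ref{section:basic-lemmas}—but the ``main obstacle'' step has a gap. When the second alternative holds along a sequence $t_j\to\infty$, the lattice element $\gamma$ a priori depends on $j$; your sentence ``put $q:=\rho_i(g\gamma)p_i$'' with a single $q$ tacitly assumes $\gamma_j p_i$ is eventually constant. The paper handles this by taking the bound $R_i\to 0$ (rather than working with a fixed $\delta_\epsilon$) and, using discreteness of $\Gamma p_i$, splitting into two cases: either $\gamma_j p_i$ is eventually constant, or $\|\gamma_j p_i\|\to\infty$. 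In the unbounded case one normalizes to a unit vector $v$, applies Lemma~\ref{lemma:basic-lemma-1} to find $s\in I$ with $(u_k(\varphi(s))v)^{0+}\neq 0$, and concludes $\|a(t_j)u_k(\varphi(s))g\gamma_j p_i\|\gtrsim\|g\gamma_j p_i\|\to\infty$, contradicting the bound. You have not addressed this case at all.

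Letting $R_i\to 0$ also buys more in the constant-$\gamma$ case than your argument does: one obtains the \emph{strict} inclusion $u_k(\varphi(s))q\in V^-(A)$, which contradicts Lemma~\ref{lemma:basic-lemma-1} directly (its proof is one line from Lemma~\ref{lemma:basic-lemma-2}: $V^-\subset V^{-0}$ forces $H$-invariance of $q$, hence $q\in V^0(A)$, impossible). By contrast, from your $V^{-0}(A)$ conclusion and the density argument of \S\ref{section:conclusion} you only deduce that $G_0$ fixes $p_{L_i}$, i.e.\ that $L_i$ is normal in $G$; this does not by itself contradict $L_i$ being proper, so the claimed contradiction ``incompatible with $q$ lying in the discrete set of the proper subgroup $L_i$'' is not justified as stated. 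Either take $R$ as an input to the Kleinbock--Margulis dichotomy and send it to zero (as the paper does), or supply the missing reason why the non-divergence vectors may be taken not $G_0$-fixed.
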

To prove this proposition, we need to introduce a particular representation of $G$.
\begin{definition}
\label{def:representation-G}
Let $\mathfrak{g}$ denote the Lie algebra of $G$, and let $d = \dim G$. We define
 $$V := \bigoplus_{i=1}^{d} \bigwedge\nolimits^{i} \mathfrak{g}.$$
 Let $G$ act on $V$ via $\bigoplus_{i=1}^d \bigwedge ^{i} \mathrm{Ad}(\cdot)$. This defines a linear representation of $G$:
 $$G \rightarrow \mathrm{GL}(\mathcal{V}).$$
\end{definition}
\begin{remark}
In this paper, we will treat $V$ as a representation of $H$.
\end{remark}
\begin{notation}
\label{notation:decomposition-eigenspace}
\par Let $F$ be a Lie group, and $\mathcal{V}$ be a finite dimensional linear representation of $F$. Then for a one-parameter diagonal subgroup 
 $D=\{d(t): t \in \R\}$ of $F$, we can decompose $\mathcal{V}$ as the direct sum of eigenspaces of $D$, i.e.,
 $$\mathcal{V} = \bigoplus_{\lambda \in \R} \mathcal{V}^{\lambda}(D),$$
 where $\mathcal{V}^{\lambda}(D) = \{v\in \mathcal{V}: d(t)v = e^{\lambda t} v\}$.
 \par We define
 $$\mathcal{V}^{+}(D) = \bigoplus_{\lambda >0} \mathcal{V}^{\lambda}(D),$$
 $$\mathcal{V}^{-}(D) = \bigoplus_{\lambda <0 } \mathcal{V}^{\lambda}(D),$$
 and similarly,
 $$\mathcal{V}^{\pm 0}(D) = \mathcal{V}^{\pm}(D) + \mathcal{V}^0(D).$$
 For a vector $v \in \mathcal{V}$, we denote by $v^{+}(D)$ ($v^{-}(D)$, $v^0(D)$ and $v^{\pm 0}(D)$ respectively) the projection of $v$ onto $\mathcal{V}^{+}(D)$ ($\mathcal{V}^{-}(D)$, $\mathcal{V}^0(D)$, and $\mathcal{V}^{\pm 0}(D)$ respectively).
\end{notation}
The following theorem due to Kleinbock and Margulis \cite{Klein_Mar} is the basic tool to prove the proposition:
\begin{theorem}[see \cite{Dani} and \cite{Klein_Mar}]
 \label{thm:non-divergence}
 Fix a norm $\|\cdot \|$ on $V$. There exist finitely many vectors  $v_1, v_2, \dots , v_r \in V$ such that for each 
 $i=1,2,\dots, r$, the orbit $\Gamma v_i$ is discrete, and the
following holds: for any $\epsilon >0$ and $R > 0$, there exists a
compact set $K\subset X$ such that for any $t >0$ and any subinterval $J\subset I$, one of the
following holds:
\begin{enumerate}[label=\textbf{S.\arabic*}]
\item There exist $\gamma \in \Gamma$ and $j\in \{1,\dots , r\}$
such that $$\sup_{s\in J} \| a(t)u_k(\varphi(s)) g \gamma v_j \| < R.$$
\item $$|\{ s\in J:  a(t)u_k(\varphi(s))x \in K\}| \geq (1-\epsilon)|J|.$$
\end{enumerate}
\end{theorem}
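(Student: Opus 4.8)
The plan is to obtain this statement as a direct application of the quantitative non-divergence criterion of Kleinbock and Margulis \cite{Klein_Mar} (see also \cite{Dani}), so that the work reduces to verifying its hypotheses in the present situation. That criterion rests on two ingredients. The first is the representation-theoretic description of the thick part of $X$: the vectors $v_1,\dots,v_r\in V$ are representatives of the finitely many $\Gamma$-orbits of the wedge products $\xi_1\wedge\cdots\wedge\xi_{\dim\Delta}$, where $\xi_1,\dots,\xi_{\dim\Delta}$ is a basis of the Lie algebra of a $\Gamma$-rational connected subgroup $\Delta\subseteq G$ (one for which $\Delta\cap\Gamma$ is a lattice in $\Delta$); each orbit $\Gamma v_j$ is discrete precisely because $\Delta\cap\Gamma$ is a lattice, and a point of $X$ lying deep in the cusp is equivalent to some vector $g\gamma v_j$ being short in $V$. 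The second ingredient is analytic: one needs the functions $s\mapsto\norm{a(t)u_k(\varphi(s))g\gamma v_j}$ on $I$ to be $(C,\alpha)$-good --- i.e.\ $\abs{\{s\in B:\abs{f(s)}<\epsilon\sup_{B}\abs{f}\}}\le C\epsilon^{\alpha}\abs{B}$ for every subinterval $B$ and every $\epsilon>0$ --- with constants $C,\alpha>0$ independent of $t$, $\gamma$ and $j$.

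To check the $(C,\alpha)$-good property, I would use that $u_k$ is a unipotent parametrisation, so the matrix entries of $\bigoplus_i\bigwedge^i\Ad(u_k(\vect{y}))$ are polynomials of some bounded degree $N$ in the coordinates of $\vect{y}\in(\R^{n-1})^k$; substituting $\vect{y}=\varphi(s)$, all these entries lie in the fixed finite-dimensional space $\mathcal{F}$ of real-analytic functions on $I$ spanned by the monomials of degree $\le N$ in the components of $\varphi^1,\dots,\varphi^k$. Since $\mathcal{F}$ is a vector space, applying the fixed linear map $w\mapsto a(t)w$ and evaluating at the fixed vector $w=g\gamma v_j$ only changes the coefficients: every coordinate function of $s\mapsto a(t)u_k(\varphi(s))g\gamma v_j$ still belongs to $\mathcal{F}$. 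Now a finite-dimensional space of real-analytic functions on a compact interval is uniformly $(C_0,\alpha)$-good for some $C_0,\alpha>0$ (this follows from compactness and the bounded multiplicity of zeros of a nonzero real-analytic function), and the norm of a vector-valued map whose coordinates are $(C_0,\alpha)$-good is itself $(C_1,\alpha)$-good by the standard Kleinbock--Margulis lemma; hence $s\mapsto\norm{a(t)u_k(\varphi(s))g\gamma v_j}$ is $(C_1,\alpha)$-good on $I$ with $C_1,\alpha$ depending only on $\mathcal{F}$, uniformly in $t>0$, $\gamma\in\Gamma$ and $j$.

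Granting these two facts, the theorem is exactly the conclusion of the Kleinbock--Margulis non-divergence theorem applied, for each $t>0$ and each subinterval $J\subseteq I$, to the map $s\mapsto a(t)u_k(\varphi(s))$: given $\epsilon>0$ and $R>0$ it produces a compact set $K=K(\epsilon,R)\subseteq X$ such that either $\sup_{s\in J}\norm{a(t)u_k(\varphi(s))g\gamma v_j}<R$ for some $\gamma\in\Gamma$ and some $j\in\{1,\dots,r\}$ --- case \textbf{S.1} --- or $\abs{\{s\in J:a(t)u_k(\varphi(s))x\in K\}}\ge(1-\epsilon)\abs{J}$ --- case \textbf{S.2}. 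I would stress that no nondegeneracy of $\varphi$ is used here: the dichotomy is unconditional, and ruling out \textbf{S.1} is left to the later dynamical arguments. The one point that genuinely needs care is the uniformity over $t$ --- the coefficients of the coordinate functions grow without bound as $t\to+\infty$, but the $(C,\alpha)$-good constants depend only on the fixed space $\mathcal{F}$ and not on those coefficients --- while the only substantive external inputs are the finiteness and discreteness of the orbits $\Gamma v_j$ for a lattice in a general Lie group and the $(C,\alpha)$-good property of real-analytic families. In this sense there is no serious obstacle: the argument is a verification together with the correct citation of the Kleinbock--Margulis theorem.
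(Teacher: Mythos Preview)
Your proposal is correct and matches the paper's treatment: the paper does not give an independent proof of this theorem but cites it from \cite{Dani} and \cite{Klein_Mar}, noting in the subsequent remark that the crucial verification is precisely the $(C,\alpha)$-good property of the coordinate functions of $a(t)u_k(\varphi(\cdot))$. Your sketch fills in exactly that verification --- that these coordinates lie in a fixed finite-dimensional space of real-analytic functions on $I$, hence are uniformly $(C,\alpha)$-good independently of $t$, $\gamma$, and $j$ --- which is all that the paper asks for.
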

\begin{remark}
 The proof for polynomial curves is due to Dani \cite{Dani}, 
 and the proof for analytic curves is due to Kleinbock and Margulis \cite{Klein_Mar}. The crucial part to prove the above theorem is to find constants 
 $C>0$ and $\alpha>0$ such that in this particular representation, all the coordinate functions of $a(t)u_k(\varphi(\cdot))$ 
 are $(C, \alpha)$-good. Here a function $f: I \rightarrow \R$ is called
 $(C,\alpha)$-good if for any subinterval $J \subset I$ and any $\epsilon >0$, the following holds:
 $$|\{s\in J: |f(s)|<\epsilon\}| \leq C\left(\frac{\epsilon}{\sup_{s\in J}|f(s)|}\right)^{\alpha} |J|.$$
\end{remark}
\par The following basic lemma is the key to prove Proposition \ref{prop:non-divergence}:
\begin{lemma}[Basic Lemma]
\label{lemma:basic-lemma-1}
Let $V$ be a finite dimensional represenation of $H$, and let $\varphi: I \rightarrow (\R^{n-1})^k$ be an analytic curve as above. For any nonzero vector $v \in V$, there exists some $s \in I$ such that 
$$u_k(\varphi(s))v \not\in V^-(A).$$
Here $V^-(A)$ is defined as in Notation \ref{notation:decomposition-eigenspace}.
\end{lemma}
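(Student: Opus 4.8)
The plan is to argue by contradiction: suppose $u_k(\varphi(s))v \in V^-(A)$ for all $s \in I$. First I would exploit the decomposition of $V$ into eigenspaces of $A$, writing $v = \sum_\omega v^\omega(A)$ with $v^{\omega_{\max}}(A) \ne 0$ for the largest eigenvalue $\omega_{\max}$ appearing in $v$. Since $v \in V^-(A)$ would already follow by taking $s = s_0$ where $\varphi(s_0) = \vect 0$ (after the reduction we may assume this, or simply evaluate at an arbitrary point and translate), we actually get $\omega_{\max} < 0$; the point is to push this to a contradiction using that $u_k(\varphi(s))$ varies analytically and the $\varphi^i$ are nonconstant. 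The key mechanism is the standard one for expanding translates: for $\vect x \in \R^{n-1}\setminus\{\vect 0\}$ the element $u(\vect x)$ lives in the copy $\SL(2,\vect x)$, and conjugating $u(\vect x)$ past $B$ or applying $J(\vect x)$ moves weight components around. Concretely, I would look at the top-weight part: the function $s \mapsto \bigl(u_k(\varphi(s))v\bigr)$ is a polynomial (in fact analytic, but polynomial in the relevant coordinates because $u_k(\varphi(s))$ acts polynomially on $V$ through $\bigwedge^i \Ad$), and if all its values lie in $V^-(A)$, then so does every derivative and every coefficient of its Taylor expansion at any point.

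The heart of the argument is then a representation-theoretic claim about $\SO(n,1)$ (and products): if $v$ is such that $u(\vect x) v \in V^-(B)$ for all $\vect x$ in a nonempty open set of directions, then in fact $v = 0$. This is where I would invoke the $\SL(2,\R)$ structure. Working one factor at a time, or rather with $d(t) = (b(t),\dots,b(t))$ and the $\SL(2,\vect x_1,\dots,\vect x_k)$ copy, one reduces to the following: in a finite-dimensional $\SL(2,\R)$-representation, if $\begin{bmatrix}1 & r\\ 0 & 1\end{bmatrix}v$ lies in the sum of strictly-negative weight spaces of $\begin{bmatrix}e^t & \\ & e^{-t}\end{bmatrix}$ for all $r$, then $v = 0$ — because applying $\begin{bmatrix}0&1\\-1&0\end{bmatrix}$ (i.e. $J$) and letting $r\to\infty$ after rescaling exhibits the highest-weight component of $v$, which cannot be killed. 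For the product group $H = (\SO(n,1))^k$ with unequal $\eta_i$, the subtlety is that $A$-weights are $\sum_i \eta_i \omega_i$ rather than a single $\SL(2)$-weight, so "negative for $A$" is a weaker condition than "negative in each factor." Here I would use that the condition $u_k(\varphi(s))v \in V^-(A)$ must hold for \emph{all} $s \in I$ simultaneously, and that the $k$ curves $\varphi^i$ can be perturbed independently (the hypothesis $(\varphi^i)^{(1)}(s)\ne\vect 0$ for each $i$ guarantees genuine motion in each factor); varying the parameter isolates the contribution of each factor's highest weight in turn, or one argues directly that the $A$-top weight component of $u_k(\varphi(s))v$ cannot vanish identically.

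More precisely, the clean way to organize this: let $\omega_0$ be the maximal value of $\omega$ with $v^\omega(A) \ne 0$ and project everything to $V^{\omega_0}(A)$; since $u_k(\varphi(s))$ is unipotent it acts on $V/\bigoplus_{\omega > \omega_0} V^\omega(A)$ in block-triangular form, and the induced action on the quotient by $\bigoplus_{\omega > \omega_0}$ sends $v \mapsto v^{\omega_0}(A) + (\text{lower})$, so the $V^{\omega_0}(A)$-component of $u_k(\varphi(s))v$ equals $v^{\omega_0}(A)$ plus contributions that are nonconstant in $s$ precisely when $v^\omega(A)\ne 0$ for some $\omega$ in between. If $\omega_0 \geq 0$ we are immediately done since $v^{\omega_0}(A) \ne 0$ shows $u_k(\varphi(s))v \notin V^-(A)$. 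If $\omega_0 < 0$, then since $V^-(A)$ is $A$-invariant and $a(t)u_k(\varphi(s)) = u_k(\Ad^+_{a(t)}\varphi(s)) a(t)$ with $\Ad^+_{a(t)}$ expanding, one can take $t \to +\infty$ along a suitable subsequence: $a(t) u_k(\varphi(s)) v = u_k(e^{\cdot}\varphi(s)\text{-type})a(t)v$, and after renormalizing by $e^{-\omega_{\max}(B\text{-weight})t}$ in each factor the limit exhibits a nonzero vector of the form $u^-_k(\ast)(\text{top part})$ lying in $V^-(A)$, which one contradicts by applying the $J(\vect x)$ elements that swap $U^+$ and $U^-$ inside each $\SL(2)$ copy, flipping the sign of the relevant weights. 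I expect the main obstacle to be precisely this last maneuver for unequal $\eta_i$: making the limiting/rescaling argument work when "positivity" is measured by the composite functional $\sum \eta_i \omega_i$ rather than factor-by-factor, which is exactly the difficulty flagged in the outline and presumably handled by the "perturbation of the diagonal subgroup" idea developed later — so in the proof of this lemma I would instead give the more elementary argument: apply $J(\vect x_1,\dots,\vect x_k)$ to reduce $V^-(A)$ membership to $V^+(A)$ membership after inversion of the curve, use the analyticity to pass to Taylor coefficients, and conclude by a dimension/weight count that $v^{\omega_0}(A)$ would have to be both a highest and a non-highest weight vector, forcing $v = 0$.
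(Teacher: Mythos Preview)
Your proposal has a genuine gap. You correctly identify the central difficulty: the $\SL(2,\R)$ copies $\SL(2,\vect{x}_1,\dots,\vect{x}_k)$ contain $A_{good}=\{(b(t),\dots,b(t))\}$, not $A$, so Lemma~\ref{lemma:sl2-representations} controls $A_{good}$-weights while the hypothesis concerns the composite $A$-weight $\sum_i \eta_i\omega_i$. But your proposed workarounds do not close this gap. The $J$-trick does not help: $J(\vect{x}_1,\dots,\vect{x}_k)$ is attached to a \emph{fixed} tuple, and for varying $s$ the conjugate $J\,u_k(\varphi(s))\,J^{-1}$ does not land in $U^-(A)$, so you do not obtain a symmetric statement about $V^+(A)$ for an inverted curve. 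The ``highest and non-highest weight'' contradiction is not well-posed either, since $A$-weight spaces are not $H$-submodules and the relevant $\SL(2)$ highest-weight theory again only sees $A_{good}$. Finally, the block-triangular argument you sketch shows that the lowest $A$-weight component of $u_k(\varphi(s))v$ is constant in $s$, but says nothing about the \emph{top} $A$-weight, which is what you need to control.

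More importantly, you misread the logical structure of the paper. The perturbation-of-the-diagonal idea is not developed ``later'' for a different purpose; it is developed in \S\ref{section:basic-lemmas} precisely to prove this Basic Lemma. The paper's proof is: assume $\{u_k(\varphi(s))v\}_{s\in I}\subset V^-(A)\subset V^{-0}(A)$; invoke Lemma~\ref{lemma:basic-lemma-2} (whose proof uses the perturbation Lemma~\ref{lemma:perturbate-diagonal-subgroup} to pass from $V^{-0}(A)$ to $V^{-0}(A_{good})$, and then Corollary~\ref{cor:sl2-representation}, which is exactly your $\SL(2,\varphi(s)-\varphi(s_0))$ mechanism) to conclude $v$ is $H$-fixed; but then $v\in V^0(A)$, contradicting $v\in V^-(A)\setminus\{0\}$. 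So the ``more elementary argument'' you hope for is not what the paper does, and your sketch does not supply one.
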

\par We will postpone the proof to \S \ref{section:basic-lemmas}.
\begin{proof}[Proof of Proposition \ref{prop:non-divergence} assuming Lemma \ref{lemma:basic-lemma-1}]
\par Let $V$ be as in Definition \ref{def:representation-G}. Since $A \subset H$ is a diagonal subgroup, we have the following decomposition as in Notation \ref{notation:decomposition-eigenspace}:
$$V = \bigoplus_{\lambda \in \R} V^{\lambda}(A).$$
 Choose the norm $\|\cdot\|$ on $V$ to be the maximum norm associated to some choices of norms on $V^{\lambda}(A)$'s.
 \par For contradiction we assume that there exists a constant $\epsilon>0$ such that for any compact subset $\mathcal{K} \subset G/\Gamma$, there exist some $t > 0$ such that $\lambda_{t}(\mathcal{K}) <1 - \epsilon$. Now we fix a sequence $\{R_i> 0: i \in \N \}$ tending to zero. By Theorem \ref{thm:non-divergence}, for any $R_i$, there exists a 
 compact subset $\mathcal{K}_i\subset X$, such that for any $t >0$, one of the
following holds:
\begin{enumerate}[label=\textit{S\arabic*.}]
\item \label{divergence_condition} There exist $\gamma \in \Gamma$ and $j\in \{1,\dots , r\}$
such that $$\sup_{s\in I} \| a(t)u_k(\varphi(s)) g \gamma v_j \| < R_{i},$$
\item \label{non_divergence_condition} $$|\{ s\in I:  a(t)u_k(\varphi(s))x \in \mathcal{K}_i\}| \geq (1-\epsilon)\abs{I}.$$
\end{enumerate}
\par From our hypothesis, for each $\mathcal{K}_i$, there exists some $t_i>0$ such that \ref{non_divergence_condition} does not hold.
So there exist $\gamma_i \in \Gamma$ and $v_{j(i)}$ such that 
\begin{equation} \label{eq:vj}
\sup_{s\in I} \| a(t_i)u_k(\varphi(s)) g \gamma_i v_{j(i)} \| < R_i.
\end{equation}
By passing to a subsequence of $\{i\in \N\}$, we may assume that $v_{j(i)} = v_j$ remains the same for all $i$. 

Since $\Gamma v_j$ is discrete in $V$, we have $t_{i}\to \infty$ as $i\to\infty$ and there are the following two cases:
\begin{enumerate}[label=\textit{Case~\arabic*.}]
\item \label{case_1} By passing to a subsequence of $\{i\in \N\}$, $\gamma_i v_j= \gamma v_j$ remains the same for all $i$.
\item \label{case_2} $\|\gamma_i v_j\| \rightarrow \infty $ along some subsequence.
\end{enumerate}
\par For Case~1: We have $a(t_i) u_k(\varphi(s)) g \gamma v_j \rightarrow \vect{0}$ as $i \rightarrow \infty$ for all $s \in I$. This implies that 
$$\{u_k(\varphi(s)) g \gamma v_j\}_{s\in I} \subset V^{-}(A),$$
which contradicts Lemma \ref{lemma:basic-lemma-1}.
\par For Case~2: After passing to a subsequence, we have
 \begin{equation} \label{eq:v}
 v := \lim_{i\rightarrow \infty} g \gamma_i v_j/{\norm{g \gamma_i v_j}}, \quad
\norm{v}=1\text{, and } \lim_{i\to\infty}\norm{g \gamma_i v_j} = \infty.
\end{equation}
By Lemma~\ref{lemma:basic-lemma-1}, let $s\in I$ be such that $u_k(\varphi(s))v\not\in V^{-}(A)$. Then by \eqref{eq:v} there exists $\delta_{0}>0$ and $i_{0}\in\N$ such that 
\[
\norm{(u_k(\varphi(s))g \gamma_i v_j)^{0+}}\geq \delta_{0} \norm{g \gamma_i v_j}, \quad \forall i\geq i_{0}.
\]
Then 
\[
\norm{a(t_{i})u_k(\varphi(s))g \gamma_i v_j}\geq \delta_{0} \norm{g \gamma_i v_j}\to\infty,
\text{ as $i\to\infty$},
\] 
which contradicts \eqref{eq:vj}. Thus Cases~1~and~2  both lead to contradictions. 
\end{proof}
\begin{remark}
\label{rmk:non-divergence}
 The same proof also shows that any limit measure of $\{\mu_t : t >0\}$ 
is a probability measure on $X$. 
\end{remark}

\section{Ratner's theorem and the linearization technique}
\label{section:ratner-thm-linearization-technique}
Take any convergent subsequence $\lambda_{t_i} \rightarrow \lambda_{\infty}$. By Proposition \ref{prop:unipotent-invariance} and Proposition \ref{prop:non-divergence}, $\lambda_{\infty}$ is a $W$-invariant probability measure on $X$, where $W$ is a unipotent one-parameter subgroup given by \eqref{eq:W}. We will apply Ratner's theorem and the linearization technique to understand the measure $\lambda_{\infty}$. 

\begin{definition}
\label{def:for-linearization}
 Let $\mathcal{L}$ be the collection of proper analytic subgroups $L < G$ such that $L\cap \Gamma$ is a lattice of $L$. Then $\mathcal{L}$ is a countable set (\cite{Ratner}).
\par For $L\in \mathcal{L}$, define:
$$N(L,W):= \{g\in G: g^{-1}Wg\subset L\}  
\text{, and }
S(L,W):= \bigcup_{L'\in \mathcal{L}, L' \subsetneq L} N(L', W).$$
\end{definition}
\par We formulate Ratner's measure classification theorem as follows (cf. \cite{Mozes_Shah}):
\begin{theorem}[see \cite{Ratner}]
\label{ratner}
Let $\pi: G \rightarrow X =G/\Gamma$ denote the natural projection sending $g\in G$ to $g\Gamma \in X$. Given the $W$-invariant probability measure $\mu$ on
$G/\Gamma$, if $\mu$ is not $G$-invariant then there exists $L \in \mathcal{L}$ such that
\begin{equation}
\begin{array}{ccc}
\mu(\pi(N(L,W)))>0 & \text{ and } & \mu(\pi(S(L,W)))=0.
\end{array}
\end{equation}
Moreover, almost every $W$-ergodic component of $\mu$ on
$\pi(N(L,W))$ is a measure of the form $g\mu_L$ where $g\in
N(L,W)\backslash S(L,W)$, $\mu_L$ is a finite $L$-invariant measure
on $\pi(L)$, and $g\mu_L(E)=\mu_L (g^{-1}E)$ for all Borel sets
$E\subset G/\Gamma$. In particular, if $L\lhd  G$, then the restriction of $\mu$ on $\pi(N(L,W))$ is
$L$-invariant.
\end{theorem}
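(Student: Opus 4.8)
The plan is to deduce this statement from Ratner's measure classification theorem for unipotent flows, packaged together with the countability of $\mathcal{L}$ exactly as in \cite{Mozes_Shah}. The genuine dynamical input, which I would cite rather than reprove, is Ratner's theorem in its ergodic form: every $W$-ergodic $W$-invariant probability measure on $X = G/\Gamma$ is \emph{homogeneous}, i.e.\ of the form $g\mu_L$ where either $L = G$ or $L \in \mathcal{L}$, with $g \in N(L,W)$ and $\mu_L$ the $L$-invariant probability measure on $\pi(gL)$. So the first step is to take the given $W$-invariant $\mu$, form its $W$-ergodic decomposition $\mu = \int_Z \mu_z \, d\tau(z)$, and apply Ratner's theorem to each component: there are $L_z \in \mathcal{L} \cup \{G\}$ and $g_z \in N(L_z, W)$ with $\mu_z = g_z\mu_{L_z}$, so that $\mu_z$ is supported on $\pi(g_z L_z) \subset \pi(N(L_z, W))$.

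Assuming $\mu$ is not $G$-invariant, the set $Z_0 := \{z : L_z \neq G\}$ must have $\tau(Z_0) > 0$: otherwise almost every component, and hence $\mu$ itself, would be $G$-invariant. Since $\mathcal{L}$ is countable (Definition \ref{def:for-linearization}, \cite{Ratner}), the sets $\pi(N(L,W))$, $L \in \mathcal{L}$, cover $\bigcup_{z\in Z_0}\mathrm{supp}(\mu_z)$, so there is at least one $L \in \mathcal{L}$ with $\mu(\pi(N(L,W))) > 0$. Among all such $L$ I would now run a descent: if $\mu(\pi(S(L,W))) > 0$, then since $S(L,W) = \bigcup_{L' \in \mathcal{L},\, L' \subsetneq L} N(L',W)$ is a countable union, some proper $L' \subsetneq L$ in $\mathcal{L}$ already has $\mu(\pi(N(L',W))) > 0$; replace $L$ by $L'$. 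Because $\dim L$ strictly decreases, this terminates at an $L \in \mathcal{L}$ with $\mu(\pi(N(L,W))) > 0$ and $\mu(\pi(S(L,W))) = 0$.

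For such $L$, restrict attention to $\pi(N(L,W)) \setminus \pi(S(L,W))$, which carries positive $\mu$-mass. Any $W$-ergodic component $\mu_z = g_z\mu_{L_z}$ whose support meets this set cannot have $L_z \subsetneq L$ (its support would lie in $\pi(S(L,W))$) and, by the minimality/maximality bookkeeping, one reads off $L_z = L$ with $g_z \in N(L,W)\setminus S(L,W)$; hence almost every $W$-ergodic component of $\mu$ on $\pi(N(L,W))$ is of the form $g\mu_L$ with $g \in N(L,W)\setminus S(L,W)$. Finally, when $L \lhd G$, normality gives $g^{-1}Wg \subset L \iff W \subset L$, so $N(L,W)$ is either $G$ or empty; in the nontrivial case the restriction of $\mu$ to $\pi(N(L,W)) = X$ is an average of measures $g\mu_L$, each invariant under $gLg^{-1} = L$, hence $L$-invariant.

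The main obstacle is that the substantive content — homogeneity of ergodic unipotent-invariant measures — is Ratner's theorem, which I treat as a black box; everything I can actually supply here is the \emph{linearization bookkeeping}: using countability of $\mathcal{L}$ to isolate a single subgroup $L$, the dimension descent to force $\mu(\pi(S(L,W))) = 0$, and the identification of the ergodic components on $\pi(N(L,W)) \setminus \pi(S(L,W))$. This packaging is precisely the Mozes–Shah reformulation \cite{Mozes_Shah}, and the delicate point within it is checking that the self-intersection set $\pi(S(L,W))$ can be made $\mu$-null while keeping $\pi(N(L,W))$ charged.
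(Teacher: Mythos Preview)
Your proposal is correct, and in fact the paper does not prove this statement at all: it is stated as a citation of Ratner's theorem in the Mozes--Shah formulation (``We formulate Ratner's measure classification theorem as follows (cf.\ \cite{Mozes_Shah})'') and used as a black box. Your sketch of the ergodic decomposition, the countability-plus-dimension-descent argument to isolate $L$ with $\mu(\pi(S(L,W)))=0$, and the normal-subgroup conclusion is exactly the standard Mozes--Shah packaging the paper is invoking.
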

\par If $\mu_{\infty}=\mu_G$, then there is nothing to prove. So we may assume $\mu_{\infty}\neq \mu_G$. Then by Ratner's Theorem, 
there exists $L \in \mathcal{L}$ such that 
\begin{equation} \label{eq:L}
\mu_{\infty}(\pi(N(L,W))) > 0 \text{ and }  \mu_{\infty}(\pi(S(L,W))) =0.
\end{equation}

Now we want to apply the linearization technique to obtain algebraic consequences of this statement.

\begin{definition}
Let $V$ be the finite dimensional representation of $G$ defined as in Definition \ref{def:representation-G}, for $L \in \mathcal{L}$,
we choose a basis $\mathfrak{e}_1, \mathfrak{e}_2, \dots, \mathfrak{e}_l$ of the Lie algebra $\mathfrak{l}$ of $L$, and define 
$$p_L = \wedge_{i=1}^l \mathfrak{e}_i \in V.$$
Define 
$$\Gamma_L := \left\{\gamma\in \Gamma: \gamma p_L = \pm p_L \right\}.$$
From the action of $G$ on $p_L$, we get a map:
$$\begin{array}{l} \eta:  G \rightarrow V,  \\
                     g \mapsto g p_L .
   \end{array}
$$
Let $\mathcal{A}$ denote the Zariski closure of $\eta(N(L,W))$ in $V$. Then $N(L,W)=G\cap \eta^{-1}({\mathcal A})$. 
\end{definition}
\begin{remark}
\label{rmk:stabilizer-pL}
It is easy to see that the stabilizer of $p_{L}$ is 
\[N_G^1(L) : = \{g \in G: gLg\inv = L \text{, and } \mathrm{det}(\Ad_g |_{\mathfrak{l}}) =1\}.\]
\end{remark}
Using the fact that $\varphi$ is analytic, we obtain the following consequence of the linearization technique (cf.\ \cite{Shah_1,Shah_2,Shah2010}).

\begin{proposition}[{\cite[Proposition~5.5]{Shah_2}}]
\label{prop:relative_small} Let $C$ be a compact subset of $N(H,W)\setminus S(H,W)$. Given $\epsilon > 0$, there exists a
compact set $\mathcal{D}\subset \mathcal{A}$ such that, 
given a relatively compact neighborhood $\Phi$ of
$\mathcal{D}$ in $V$, there exists a neighborhood $\mathcal{O}$
of $C\Gamma$ in $G/\Gamma$ such that for any $t\in\R$ and subinterval
$J\subset I$, one of the following statements holds:
\begin{enumerate}[label=\textit{SS\arabic*.}]
\item \label{1} $|\{s\in J: a(t)u(\varphi(s))g\Gamma \in \mathcal{O}\}|\leq \epsilon \abs{J}$.
\item \label{2} There exists $\gamma\in\Gamma$ such that $\{a(t)z(s)u(\varphi(s))g\gamma p_{L}\}_{s \in I} \subset \Phi$.
\end{enumerate}
\end{proposition}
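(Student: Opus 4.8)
The statement is the Dani--Margulis linearization lemma in the precise form used by Shah, so the plan is to follow \cite[\S5]{Shah_2} line by line, isolating the one place where the particular shape of our trajectories enters. Recall that $\eta(g)=g p_L$ identifies $N(L,W)$ with $G\cap\eta\inv(\mathcal{A})$, where $\mathcal{A}$ is the Zariski closure of $\eta(N(L,W))$, and that $S(L,W)$ is cut out by the countably many proper subgroups $L'\subsetneq L$ in $\mathcal{L}$. The first step is the algebraic--topological preparation, taken verbatim from Dani--Margulis: since the compact set $C$ is disjoint from the singular set $S(L,W)$, one produces a compact $\mathcal{D}\subset\mathcal{A}$ with $\eta(C)\subset\mathcal{D}$ such that any $\Gamma$-translate $g\gamma p_L$ which could be driven arbitrarily close to $\mathcal{A}$ by a trajectory spending a positive proportion of time near $C\Gamma$ must in fact lie in a fixed compact neighbourhood of $\mathcal{D}$, and such that $\mathcal{D}$ is bounded away from $\eta(S(L,W))$. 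Given $\mathcal{D}$ and a relatively compact neighbourhood $\Phi\supset\mathcal{D}$, one then shrinks $\Phi$ slightly, pulls it back through $\eta$ and through the natural map $G/\Gamma_L\to G/\Gamma$, and takes $\mathcal{O}$ to be the resulting neighbourhood of $C\Gamma$.

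The only input specific to this paper is an analytic one: I claim that, writing $\rho:G\to\GL(V)$ for the representation of Definition \ref{def:representation-G}, all linear functionals of the $V$-valued map
$$ s \;\longmapsto\; \rho\bigl(a(t)z(s)u_k(\varphi(s))g\bigr)\,\gamma p_L $$
are $(C_0,\alpha_0)$-good on $I$ with constants $C_0,\alpha_0$ depending only on $G$, $V$, $\varphi$ and the analytic family $z(\cdot)$, and in particular independent of $t$ and of $\gamma\in\Gamma$. Indeed, all these functions lie in the fixed finite-dimensional space of analytic functions on $I$ spanned by $\{\,s\mapsto\langle\xi,\rho(z(s)u_k(\varphi(s))g)v\rangle : \xi\in V^{*},\,v\in V\,\}$: replacing $v$ by $\gamma p_L$ is just a choice of $v\in V$, while composing with $\rho(a(t))$ on the left only replaces $\xi$ by $\rho(a(t))^{*}\xi$, so neither operation leaves this space. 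Since $\varphi$ and $z(\cdot)$ are analytic on the compact interval $I$, this space consists of $(C_0,\alpha_0)$-good functions with uniform constants by the Kleinbock--Margulis criterion \cite{Klein_Mar}. Keeping the constants uniform in $t$ and $\gamma$ simultaneously is the point that needs the most care, and the argument above is designed to do exactly that.

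With the uniform good property in hand, the rest is the standard linearization machinery, identical to Shah's. Suppose the first alternative \ref{1} fails for some $t$ and some subinterval $J\subset I$, so that $a(t)u_k(\varphi(s))g\Gamma\in\mathcal{O}$ on a subset of $J$ of measure $>\epsilon\abs{J}$; for each such $s$ there is $\gamma_s\in\Gamma$ with $\rho(a(t)z(s)u_k(\varphi(s))g)\gamma_s p_L$ near $\mathcal{D}$. Feeding the $(C_0,\alpha_0)$-good curves $s\mapsto\rho(a(t)z(s)u_k(\varphi(s))g)\gamma p_L$ into the combinatorial lemma of the linearization technique, together with the discreteness of $\Gamma p_L$, one extracts a \emph{single} $\gamma\in\Gamma$ that works on a whole subinterval and with the property that these points lie on $\mathcal{A}$ there. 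Because $s\mapsto\rho(a(t)z(s)u_k(\varphi(s))g)\gamma p_L$ is real-analytic on $I$ and $\mathcal{A}$ is Zariski closed, the containment in $\mathcal{A}$ spreads to all of $I$; and then the way $\mathcal{D}$ was chosen relative to the compact set $C$ forces the entire curve $\{a(t)z(s)u_k(\varphi(s))g\gamma p_L\}_{s\in I}$ to stay inside $\Phi$, which is the second alternative \ref{2}. I expect the analytic step --- the uniform good property above together with this last analytic-continuation argument, which between them are exactly where analyticity of $\varphi$ (rather than mere smoothness) is used --- to be the only place requiring new work; everything else is quoted from \cite[\S5]{Shah_2}.
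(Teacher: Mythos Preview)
Your proposal is correct and matches the paper's own treatment: the paper does not prove this proposition but simply cites \cite[Proposition~5.5]{Shah_2}, noting only that the analyticity of $\varphi$ is what makes the linearization technique applicable. Your sketch fills in exactly the details one would expect from Shah's argument, correctly isolating the uniform $(C_0,\alpha_0)$-good property (via the finite-dimensional space of analytic functions on $I$) as the one place where the specific shape of the trajectories enters, and otherwise quoting the standard Dani--Margulis machinery.
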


The following proposition provides the obstruction to the limiting measure not being $G$-invariant in terms of linear actions of groups, and it is a key result for further investigations. 

\begin{proposition}
 \label{prop:algebraic-condition}
 There exists a $\gamma \in \Gamma$ such that
\begin{equation}\label{eq:V0-}
\{u(\varphi(s))g\gamma p_L\}_{s\in I} \subset V^{-0}(A).
\end{equation}
\end{proposition}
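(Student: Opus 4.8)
The plan is to combine Ratner's theorem (Theorem~\ref{ratner}) with the linearization result in Proposition~\ref{prop:relative_small}, following the strategy of \cite{Shah_2,Shah2010}. First I would invoke \eqref{eq:L}: since $\mu_\infty = \lambda_\infty$ is a $W$-invariant probability measure that is not $G$-invariant, there is an $L \in \mathcal{L}$ with $\lambda_\infty(\pi(N(L,W))) > 0$ and $\lambda_\infty(\pi(S(L,W))) = 0$. Using inner regularity of the measure, I would choose a compact subset $C \subset N(L,W) \setminus S(L,W)$ with $\lambda_\infty(\pi(C)) > 0$; set $\epsilon_0 := \lambda_\infty(\pi(C))/2 > 0$. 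The point of excising $S(L,W)$ is that this is exactly the locus where the linearization degenerates (several translates of $p_L$ can collapse together), so working on its complement is what makes Proposition~\ref{prop:relative_small} applicable.

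Next I would feed $C$ and $\epsilon_0$ into Proposition~\ref{prop:relative_small} to obtain the compact set $\mathcal{D} \subset \mathcal{A}$, then fix a relatively compact neighborhood $\Phi$ of $\mathcal{D}$ in $V$, and get the neighborhood $\mathcal{O}$ of $C\Gamma$ in $X$. The key dichotomy is then applied to $J = I$ and to each $t = t_i$ from the convergent subsequence $\lambda_{t_i} \to \lambda_\infty$. Because $\lambda_\infty(\mathcal{O}) \geq \lambda_\infty(\pi(C)) = 2\epsilon_0$ and $\lambda_{t_i} \to \lambda_\infty$ weakly (shrinking $\mathcal{O}$ slightly to an open set if needed so that the portmanteau inequality $\liminf \lambda_{t_i}(\mathcal{O}) \geq \lambda_\infty(\mathcal{O})$ applies), for all large $i$ we have $\lambda_{t_i}(\mathcal{O}) > \epsilon_0$, i.e.
$$
\frac{1}{|I|}\bigl|\{s \in I : z(s)a(t_i)u_k(\varphi(s))x \in \mathcal{O}\}\bigr| > \epsilon_0 .
$$
Here I would need the small bookkeeping point that $z(s) a(t_i) u_k(\varphi(s))x$ and $a(t_i)u_k(\varphi(s))x$ differ by the bounded factor $z(s)$, so up to enlarging $\mathcal{O}$ by a fixed compact amount (equivalently working with the $z$-twisted version throughout, as the proposition already does) statement \ref{1} fails for $J = I$, $t = t_i$. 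Hence statement \ref{2} must hold: there is $\gamma_i \in \Gamma$ with $\{a(t_i)z(s)u_k(\varphi(s))g\gamma_i p_L\}_{s \in I} \subset \Phi$.

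Now I would extract the desired algebraic conclusion by letting $i \to \infty$. Since $\Phi$ is relatively compact, $\{a(t_i)z(s)u_k(\varphi(s))g\gamma_i p_L : s \in I\}$ stays in a fixed bounded set for all $i$. Decompose $V = \bigoplus_\lambda V^\lambda(A)$ and write $w_i(s) := z(s)u_k(\varphi(s))g\gamma_i p_L$. If, for some fixed $s$, $w_i(s)$ had a nonzero component in $V^+(A)$ that did not go to zero, then $a(t_i) w_i(s)$ would blow up along a subsequence (as $t_i \to \infty$), contradicting boundedness. So I would argue that $\Gamma p_L$ is discrete (this is standard: $\Gamma_L \backslash \Gamma$-orbit of $p_L$ is discrete, $\Gamma p_L$ being a union of finitely many such), hence the $w_i(s_0)$ for a fixed base point $s_0$ either stabilize or escape to infinity; as in Case~1/Case~2 of the proof of Proposition~\ref{prop:non-divergence}, the escaping case is incompatible with boundedness of $a(t_i)w_i(s_0)$ together with Lemma~\ref{lemma:basic-lemma-1}, so after passing to a subsequence $\gamma_i p_L = \gamma p_L$ is constant; then $z(s)u_k(\varphi(s))g\gamma p_L$ is independent of $i$, and boundedness of $a(t_i)(z(s)u_k(\varphi(s))g\gamma p_L)$ for all $i$ forces this vector to lie in $V^{-0}(A)$ for every $s \in I$. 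Finally, since $z(s) \in Z(A)$ normalizes each $V^\lambda(A)$ and in particular preserves $V^{-0}(A)$, and $z(s)$ depends analytically (hence continuously, staying in a compact subset of $Z(A)$) on $s$, I can strip off $z(s)$ to conclude $\{u_k(\varphi(s))g\gamma p_L\}_{s \in I} \subset V^{-0}(A)$, which is \eqref{eq:V0-}. The main obstacle I anticipate is the careful handling of the $z(s)$-twist and the discreteness/subsequence argument needed to make $\gamma_i$ eventually constant; the rest is a fairly mechanical application of the machinery already assembled.
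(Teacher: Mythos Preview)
Your proposal is correct and follows essentially the same route as the paper: invoke \eqref{eq:L} to get a compact $C\subset N(L,W)\setminus S(L,W)$ with positive $\lambda_\infty$-mass, apply Proposition~\ref{prop:relative_small} and weak convergence to rule out \ref{1} for large $t_i$, obtain $\gamma_i$ from \ref{2}, then use discreteness of $\Gamma p_L$ to split into the constant case (giving $V^{-0}(A)$ by boundedness) and the escaping case (ruled out via Lemma~\ref{lemma:basic-lemma-1} exactly as in Case~2 of Proposition~\ref{prop:non-divergence}), and finally strip off $z(s)$ using $Z_H(A)$-invariance of $V^{-0}(A)$. Your handling of the $z(s)$-twist and the portmanteau argument is in fact slightly more explicit than the paper's, but the underlying argument is the same.
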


\begin{proof}[Proof (assuming Lemma~\ref{lemma:basic-lemma-1}).]
By \eqref{eq:L}, there exists a compact subset $C \subset N(L,W))\setminus S(L,W)$  and $\epsilon>0$ such that $\mu_{\infty}(C\Gamma) >\epsilon>0$. Apply Proposition~\ref{prop:relative_small} to obtain $\mathcal{D}$, and choose any $\Phi$, and obtain a $\mathcal{O}$ so that either \ref{1} or \ref{2} holds.  Since $\lambda_{t_{i}}\not\to \mu_{\infty}$, we conclude that \ref{1} does not hold for $t=t_{i}$ for all $i\geq i_{0}$. Therefore  for every $i\geq i_{0}$, \ref{2} holds and there exists $\gamma_{i}\in\Gamma$ such that
\begin{equation} \label{eq:Phi}
\{a(t_{i})z(s)u(\varphi(s))g\gamma_{i}p_{L}\}_{s \in I}\subset \Phi.
\end{equation}

Since $\Gamma p_{L}$ is discrete in $V$, by passing to a subsequence, there are two cases:
\begin{enumerate}
\item[{\it Case 1.}] $\gamma_{i}p_{L}=\gamma p_{L}$ for some $\gamma\in\Gamma$ for all $i$; or 
\item[{\it Case 2.}] $\norm{\gamma_{i}p_{L}}\to\infty$ as $i\to \infty$. 
\end{enumerate}

In {\it Case~1}, since $\Phi$ is bounded in \eqref{eq:Phi}, we deduce that $z(s)u(\varphi(s))g\gamma p_{L}\subset V^{-0}(A)$ for all $s\in I$. Since $V^{-0}(A)$ is $Z_{H}(A)$-invartiant, \eqref{eq:V0-} holds. 

In {\it Case~2}, by arguing as in the \ref{case_2} of the Proof of Proposition~\ref{prop:non-divergence}, using Lemma~\ref{lemma:basic-lemma-1}, we obtain that $\norm{a(t_{i})u(\varphi(s))g\gamma_{i}p_{L}}\to\infty$. This contradicts \eqref{eq:Phi}, because $z(s)\subset Z_H(A)$ and $\Phi$ is bounded. Thus {\it Case~2} does not occur.
\end{proof}

\section{Basic lemmas}
\label{section:basic-lemmas}
In this section we will recall and prove some basic lemmas on linear representations of $\SL(2,\R)$ and $H = (\SO(n,1))^k$. They are very important to study the linear algebraic condition \eqref{eq:V0-}.
\par We recall the following lemma on linear representations of $\SL(2,\R)$ due to Shah and Yang \cite{shah_yang}:
\begin{lemma}[See~{\cite[Lemma 4.1]{shah_yang}}]
\label{lemma:sl2-representations}
Let $V$ be a finite dimensional linear representation of $\SL(2, \R)$. Let 
$$B = \left\{ b(t):= \begin{bmatrix}e^t & \\ & e^{-t} \end{bmatrix}: t \in \R \right\}, U = \left\{u(s):= \begin{bmatrix}1 & s \\ 0 & 1\end{bmatrix}: s \in \R \right\}, U^- = \left\{u^-(s) :=\begin{bmatrix}1 & 0 \\ s & 1\end{bmatrix}: s \in \R \right\}.$$
Express $V$ as the direct sum of eigenspaces with respect to $B$: 
$$V = \bigoplus_{\omega \in \R} V^{\omega}(B), \text{ where } V^{\omega}(B) : = \{v \in V : b(t)v = e^{\omega t}v \text{ for all }t \in \R \}.$$
For any $v \in V\setminus \{ \vect{0}\}$ and $\omega \in \R$, let $v^{\omega} = v^{\omega}(B)$ denote the $V^{\omega}(B)$-component of $v$, $\omega^{\max}(v) = \max \{\omega: v^{\omega} \neq \vect{0}\}$, and $v^{\max} = v^{\omega^{\max}(v)}$. Then for any $r \neq 0$,
\begin{equation}
\label{eq:omega-max}
\omega^{\max}(u(r)v) \geq - \omega^{\max}(v).
\end{equation}
In particular, if $\omega^{\max}(v) < 0$, then $\omega^{\max}(u(r)v) >0$ for any $r \neq 0$. Moreover if the equality holds in \eqref{eq:omega-max}, then 
\begin{equation}
\label{eq:omega-max-equality}
v = u^-(-r^{-1}) v^{\max} \text{ and } (u(r)v)^{\max} = \sigma(r)v^{\max}, \text{ where } \sigma(r)= \begin{bmatrix}0 & r \\ r\inv & 0 \end{bmatrix}.
\end{equation}
\end{lemma}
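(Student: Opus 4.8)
The plan is to deduce everything from the Bruhat factorization of $u(r)$ together with elementary bookkeeping of $B$-weights. For $r\neq 0$ one has the identity in $\SL(2,\R)$
\[
u(r)=u^-(r\inv)\,\sigma(r)\,u^-(r\inv),\qquad \sigma(r)=\begin{bmatrix}0 & r\\ -r\inv & 0\end{bmatrix}=\begin{bmatrix}r & 0\\ 0 & r\inv\end{bmatrix}\begin{bmatrix}0 & 1\\ -1 & 0\end{bmatrix},
\]
which is verified by a direct $2\times 2$ multiplication (this is the element named $\sigma(r)$ in the statement, the sign of the lower-left entry being an inessential normalisation). Thus, to control $u(r)v$ it suffices to understand how each of the three factors $u^-(r\inv)$, $\sigma(r)$, $u^-(r\inv)$ acts on the $B$-weight decomposition $V=\bigoplus_\omega V^\omega(B)$; no reduction to irreducible summands is needed.

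The two facts I would record first are: (i) since $\mathrm{Lie}(U^-)$ is the weight-lowering operator, $u^-(s)$ maps $V^\omega(B)$ into $V^\omega(B)\oplus\bigoplus_{\omega'<\omega}V^{\omega'}(B)$, acting as the identity modulo strictly lower weights; consequently for any $v\neq 0$ we get $\omega^{\max}(u^-(s)v)=\omega^{\max}(v)$ and $(u^-(s)v)^{\max}=v^{\max}$, because the top component admits no cancellation; (ii) $\sigma(r)$ normalises $B$ with $\sigma(r)b(t)\sigma(r)\inv=b(-t)$, so $\sigma(r)$ carries $V^\omega(B)$ isomorphically onto $V^{-\omega}(B)$; hence for $v\neq 0$ one has $\omega^{\max}(\sigma(r)v)=-\omega^{\min}(v)$ and the top component of $\sigma(r)v$ equals $\sigma(r)$ applied to the bottom component of $v$ (where $\omega^{\min}$ and the bottom component are the evident analogues of $\omega^{\max}$, $v^{\max}$). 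Now put $v_1:=u^-(r\inv)v$ and $v_2:=\sigma(r)v_1$, so that $u(r)v=u^-(r\inv)v_2$. By (i), $\omega^{\max}(v_1)=\omega^{\max}(v)$; by (ii), $\omega^{\max}(v_2)=-\omega^{\min}(v_1)$; by (i) again, $\omega^{\max}(u(r)v)=\omega^{\max}(v_2)=-\omega^{\min}(v_1)$. Since $\omega^{\min}(v_1)\le\omega^{\max}(v_1)=\omega^{\max}(v)$ always, this yields $\omega^{\max}(u(r)v)\ge-\omega^{\max}(v)$, which is \eqref{eq:omega-max}; and if $\omega^{\max}(v)<0$ then every weight occurring in $v_1$ is negative, so $\omega^{\min}(v_1)<0$ and $\omega^{\max}(u(r)v)=-\omega^{\min}(v_1)>0$.

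For the equality case: equality in \eqref{eq:omega-max} holds precisely when $\omega^{\min}(v_1)=\omega^{\max}(v)=\omega^{\max}(v_1)$, i.e. when $v_1$ lies in the single weight space $V^{\omega^{\max}(v)}(B)$. In that situation $v_1$ coincides with its own top component, which by (i) equals $v^{\max}$; hence $v_1=v^{\max}$, and therefore $v=u^-(-r\inv)v^{\max}$, the first half of \eqref{eq:omega-max-equality}. Moreover $v_2=\sigma(r)v_1=\sigma(r)v^{\max}$ then lies in the single weight space $V^{-\omega^{\max}(v)}(B)$, so by (i) $(u(r)v)^{\max}=v_2^{\max}=v_2=\sigma(r)v^{\max}$, the second half. (When $\omega^{\max}(v)=0$ these assertions degenerate but remain valid by the identical computation.)

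I do not anticipate a genuine obstacle: the whole argument is the Bruhat factorization plus the two bookkeeping facts (i) and (ii). The only points requiring attention are verifying that the top component is unaffected by $u^-(s)$ (automatic, since $u^-(s)$ adds only strictly-lower-weight terms), and pinning down the precise matrix $\sigma(r)$, signs included, so that the displayed identity \eqref{eq:omega-max-equality} comes out verbatim.
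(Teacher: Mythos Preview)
The paper does not supply a proof of this lemma; it is simply quoted from \cite[Lemma 4.1]{shah_yang}. Your argument via the Bruhat factorization $u(r)=u^-(r\inv)\,\sigma(r)\,u^-(r\inv)$, combined with the bookkeeping facts that $u^-(s)$ preserves the top weight component while $\sigma(r)$ exchanges $V^{\omega}(B)$ with $V^{-\omega}(B)$, is correct and is the standard route to this result; it is almost certainly the argument in the cited reference as well. You have also rightly flagged that the matrix $\sigma(r)$ as printed in the statement has determinant $-1$ and hence does not lie in $\SL(2,\R)$: the correct element is your $\sigma(r)=\begin{bmatrix}0 & r\\ -r\inv & 0\end{bmatrix}$, and with this sign the identities in \eqref{eq:omega-max-equality} hold exactly as stated.
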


The following lemma will allow us to perturbate the diagonal subgroup $A=\{a(t): t \in \R\}$:
\begin{lemma}
\label{lemma:perturbate-diagonal-subgroup}
Let $H = (\SO(n,1))^k$, $A = \{a(t) = (b(\eta_1 t), b(\eta_2 t), \dots , b(\eta_k t)): t \in \R\}$ with $\eta_1 = \cdots = \eta_{k_1} = \eta > \eta_{k_1 +1}$, $\varphi: I \rightarrow (\R^{n-1})^k$ be an analytic curve as in Theorem \ref{thm:main-result} and $V$ be a representation of $H$. Suppose there exists a nonzero vector $v \in V$ such that 
   $$u_k(\varphi(s))v \in V^{-0}(A) \text{ for all } s \in I ,$$
then for all $s \in I$, $u_k(\varphi(s))v \in V^{-0}(A')$ where $A' = \{a'(t) = (b(\eta'_1 t), b(\eta'_2 t) \dots, b(\eta'_k t)): t \in \R\}$ with $\eta'_i = \eta_{k_1 +1}$ for $i = 1,2,\dots , k_1 +1$ and $\eta'_i = \eta_i$ for $i = k_1 +2, \dots, k$.
\end{lemma}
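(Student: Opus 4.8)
The plan is to reduce the statement to a collection of $\SL(2,\R)$-computations, one for each of the first $k_1$ factors, exploiting that the exponents $\eta_1=\cdots=\eta_{k_1}=\eta$ are equal and strictly larger than the rest. Write $\varphi(s)=(\varphi^1(s),\dots,\varphi^k(s))$, and recall that after the reduction of \S\ref{section:unipotent-invariance-non-divergence} we may assume $(\varphi^i)^{(1)}(s)\neq\vect{0}$ for all $s\in I$ and all $i$. The key structural input is Lemma~\ref{lemma:sl2-representations}: if we decompose $V$ with respect to the diagonal subgroup $D=\{d(t)=(b(t),\dots,b(t)):t\in\R\}$ sitting inside the first $k_1$ factors and trivial on the rest (more precisely, inside the relevant $\SL(2,\vect{x}_1,\dots,\vect{x}_{k_1})$ copies for each fixed direction), then controlling $\omega^{\max}$ under the unipotent action $u(r)$ gives information forcing $v$ to lie in a negative-weight space unless $v$ is built from a specific $u^-$-orbit of its top component.

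First I would show that the hypothesis $u_k(\varphi(s))v\in V^{-0}(A)$ for all $s\in I$, combined with analyticity, actually propagates to an $s$-independent algebraic constraint on $v$ itself. The standard move: write $A=A'\cdot A''$ roughly, where $A''$ is the "difference" one-parameter group whose exponents are positive on exactly the first $k_1$ factors (scaled by $\eta-\eta_{k_1+1}>0$) and zero elsewhere; then $V^{-0}(A)=V^{-0}(A')\cap V^{-0}(A'')$ essentially, and since $u_k(\varphi(s))$ commutes with $A''$ modulo the horospherical group directions, the condition $u_k(\varphi(s))v\in V^{-0}(A)$ already gives $u_k(\varphi(s))v\in V^{-0}(A')$ provided we can also show $u_k(\varphi(s))v\in V^{-0}(A'')$ — but the latter is automatic from the hypothesis since $V^{-0}(A)\subseteq V^{-0}(A'')$ is false in general; so instead I would argue the other way: I want to move negative $A$-weight vectors in the last $k-k_1$ coordinates across, using that those coordinates carry exponent $\eta_{k_1+1}$ in \emph{both} $A$ and $A'$, so their contribution to the weight is unchanged, and the only coordinates whose exponent changes are the first $k_1$, where $A'$ has the \emph{smaller} exponent $\eta_{k_1+1}<\eta$. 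Since shrinking a positive exponent can only move a weight toward zero or keep it negative, one direction is free; the real content is ruling out the vectors that were strictly negative for $A$ but become nonnegative for $A'$, and for that I invoke Lemma~\ref{lemma:sl2-representations} applied factor-by-factor along the curve, using the perturbation-of-the-diagonal idea to certify that such vectors would violate $u_k(\varphi(s))v\in V^{-0}(A)$ for some $s$.

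Concretely, the core step is: for each $i\in\{1,\dots,k_1\}$ and each generic $s$, embed the $i$th factor data into an $\SL(2,\varphi^i(s))$ copy as in \S\ref{subsection:preliminaries}, so that $u(\varphi^i(s))$ becomes $\left[\begin{smallmatrix}1&1\\0&1\end{smallmatrix}\right]$ and $b(\eta t)$ becomes the diagonal. Decompose $V$ simultaneously with respect to all these $\SL(2)$-copies; Lemma~\ref{lemma:sl2-representations} then controls the top weight of $u(r\varphi^i(s))v$ as a function of $r$. If the projection of $v$ to the strictly-negative $A'$-part were nonzero, then differentiating the condition along $s$ (so that the direction $\varphi^i(s)$ varies, using $(\varphi^i)^{(1)}(s)\neq\vect{0}$ and the nondegeneracy "not contained in a proper subspace" hypothesis of Theorem~\ref{thm:main-result}) and applying the equality case \eqref{eq:omega-max-equality} of the lemma would force $v$ to lie in a proper subspace determined by a single $u^-$-orbit, contradicting the spanning hypothesis on $\{(\varphi(s_1)-\varphi(s_0))\inv-(\varphi(s_2)-\varphi(s_0))\inv\}$. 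This is exactly the mechanism by which "no room in $V^{-0}(A)$" gets upgraded to "no room in $V^{-0}(A')$."

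\textbf{Main obstacle.} The hard part will be bookkeeping the interaction between the $k_1$ different $\SL(2)$-copies, which are \emph{not} mutually commuting inside a single $\SO(n,1)$ factor when the directions $\varphi^i(s)$ vary, and making the equality-case analysis of Lemma~\ref{lemma:sl2-representations} run simultaneously across all of them. Equivalently: extracting from "$\omega^{\max}$ fails to grow" the rigid conclusion that $v$ sits on one specific $U^-(A')$-orbit, and then checking that the spanning hypothesis of Theorem~\ref{thm:main-result} — phrased in terms of the inverted differences $(\varphi(s)-\varphi(s_0))\inv$ — is precisely what excludes that orbit. I expect this to require an induction on $k_1$ (or on $\dim V$), peeling off one factor at a time, together with a careful choice of the auxiliary curve $s\mapsto\varphi(s)-\varphi(s_0)$ and its inversion, so that the $\SL(2,\vect{x})$-picture with $u^-(\vect{x}\inv)$ corresponding to $\left[\begin{smallmatrix}1&0\\1&1\end{smallmatrix}\right]$ matches the hypothesis verbatim.
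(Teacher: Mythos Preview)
Your plan has a genuine gap and, in places, conflates this lemma with the later Corollary~\ref{cor:sl2-representation}. Two specific problems:

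First, the spanning hypothesis on $\{(\varphi(s_1)-\varphi(s_0))\inv-(\varphi(s_2)-\varphi(s_0))\inv\}$ from Theorem~\ref{thm:main-result} plays no role in the paper's proof of this lemma; it is only invoked later, in Corollary~\ref{cor:sl2-representation}, after the diagonal has already been perturbed down to $A_{good}$. Here the only input needed from the curve is that $(\varphi^i)^{(1)}(s)\neq\vect{0}$ on $I$. Your plan to derive a contradiction from the equality case of Lemma~\ref{lemma:sl2-representations} via the spanning hypothesis is therefore aiming at the wrong target.

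Second, your factor-by-factor $\SL(2,\varphi^i(s))$ embeddings for $i\leq k_1$ do not contain $A$ (nor $A'$), so Lemma~\ref{lemma:sl2-representations} applied to one of them gives information about the $i$th $B$-weight $\omega_i$ alone, not about the $A$-weight $\sum_j\eta_j\omega_j$. You never explain how to reassemble these individual $\omega_i$-constraints into control of the weighted sum, and your ``main obstacle'' paragraph correctly flags this without resolving it.

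The paper's argument is quite different and avoids both issues. It first proves (Claim~\ref{claim:unipotent-invariance}) that the $A$-zero-weight part $(u_k(\varphi(s))v)^0(A)$ is invariant under the one-parameter unipotent group generated by the derivative direction $(\,(\varphi^1)^{(1)}(s),\dots,(\varphi^{k_1})^{(1)}(s),\vect{0},\dots,\vect{0}\,)$; this is done by computing $\lim_{t\to+\infty} a(t)u_k(\varphi(s+re^{-\eta t}))v$ in two ways, using only that $\eta_1=\cdots=\eta_{k_1}=\eta>\eta_{k_1+1}$. That unipotent invariance forces, by elementary $\SL(2,\R)$ highest-weight theory, $(u_k(\varphi(s))v)^0(A)\in V^{+0}(B_{k_1})$, where $B_{k_1}=\{(b(t),\dots,b(t),e,\dots,e)\}$. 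One then sets $A_\epsilon=\{a(t)b_{k_1}(-\epsilon t)\}$ and checks, for all sufficiently small $\epsilon>0$, that $u_k(\varphi(s))v\in V^{-0}(A_\epsilon)$: the zero-$A$-weight part lands in $V^{-0}(A_\epsilon)$ by the previous sentence, and the strictly-negative-$A$-weight part stays in $V^{-}(A_\epsilon)$ for small $\epsilon$ by continuity of eigenvalues. Iterating this perturbation (replacing $A$ by $A_\epsilon$ and repeating) drives $A_\epsilon$ down to $A'$. No appeal to Lemma~\ref{lemma:sl2-representations} or to the spanning hypothesis is needed at this stage.
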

\begin{proof}
We first prove the following claim:
\begin{claim}
\label{claim:unipotent-invariance}
For all $s \in I$, $(u_k(\varphi(s))v)^{0}(A) $ is invariant under the following unipotent subgroup
 $$\{u_k(r (\varphi^{1})^{(1)}(s), \dots, r (\varphi^{k_1})^{(1)}(s), \vect{0}, \dots, \vect{0}): r\in \R\}.$$
\end{claim}
\begin{proof}[Proof of the claim]
On the one hand,
$$\begin{array}{rcl} a(t) u_k(\varphi(s + r e^{-\eta t}))v  & = & a(t) u_k(\varphi(s) + r e^{-\eta t}\varphi^{(1)}(s) + O(e^{-2\eta t}) )v \\ & = &  a(t) u_k(r e^{-\eta t}\varphi^{(1)}(s) + O(e^{-2\eta t}) ) a(-t) a(t) u_k(\varphi(s))v \\ & = & u_k(O(e^{-(\eta - \eta_{k_1 +1})t})) u_k(r (\varphi^{1})^{(1)}(s), \dots, r (\varphi^{k_1})^{(1)}(s), \vect{0}, \dots, \vect{0}) a(t)u_k(\varphi(s))v .\end{array}$$
Therefore as $t \rightarrow +\infty$, $$a(t)u_k(\varphi(s + r e^{-\eta t}))v \rightarrow u_k(r (\varphi^{1})^{(1)}(s), \dots, r (\varphi^{k_1})^{(1)}(s), \vect{0}, \dots, \vect{0}) (u_k(\varphi(s))v)^0(A).$$
On the other hand, 
$$a(t) u_k(\varphi(s + r e^{-\eta t})) v = (u_k(\varphi(s + r e^{-\eta t})) v)^0(A) + O(e^{-a t})$$
for some constant $a >0$. As $t \rightarrow +\infty$, $(u_k(\varphi(s + r e^{-\eta t})) v)^0(A) \rightarrow (u_k(\varphi(s))v)^0(A)$. Therefore, 
$$a(t) u_k(\varphi(s + r e^{-\eta t})) v \rightarrow (u_k(\varphi(s))v)^0(A) \text{ as } t \rightarrow +\infty.$$
\par Thus, we have that 
$$u_k(r (\varphi^{1})^{(1)}(s), \dots, r (\varphi^{k_1})^{(1)}(s), \vect{0}, \dots, \vect{0}) (u_k(\varphi(s))v)^0(A) = (u_k(\varphi(s))v)^0(A) \text{ for all } r \in \R .$$
This proves the claim.
\end{proof}
Let us define
 $$B_{k_1} := \{b_{k_1} (t) := (b(t), \dots , b(t), e, \dots, e): t \in \R \},$$
 where the first $k_1$ components are $b(t)$ and the rest are identity $e$. Now let us consider the $\SL(2, \R)$ copy in $H$ with $u_k(r (\varphi^{1})^{(1)}(s), \dots, r (\varphi^{k_1})^{(1)}(s), \vect{0}, \dots, \vect{0})$ corresponding to $\begin{bmatrix} 1 & r \\ 0 & 1\end{bmatrix}$, and $b_{k_1}(t)$ corresponding to $\begin{bmatrix}e^t & \\ & e^{-t}\end{bmatrix}$. Let us denote this subgroup by $\SL(2, k_1)$. Considering $V$ as a represenation of this $\SL(2,\R)$, we have that $(u_k(\varphi(s))v)^0(A)$ is invariant under $\left\{\begin{bmatrix}1 & r \\ 0 & 1\end{bmatrix}: r \in \R\right\}$. By basic theory of $\SL(2, \R)$ representations, we conclude that $(u_k(\varphi(s))v)^0(A) \in V^{+0}(B_{k_1})$. For $\epsilon >0$, define
  $$A_{\epsilon} := \{a_{\epsilon}(t): = a(t) b_{k_1}(-\epsilon t) : t \in \R\}.$$
 Since $(u_{k}(\varphi(s))v)^0(A) \in V^{+0}(B_{k_1})$, we have that $(u_{k}(\varphi(s))v)^0(A) \in V^{-0}(A_{\epsilon})$. Moreover, for $\epsilon >0$ small enough, $(u_{k}(\varphi(s))v)^-(A) \in V^- (A_{\epsilon})$. Therefore, for $\epsilon >0$ small enough,
 $$u_k(\varphi(s))v \in V^{-0}(A_{\epsilon}) \text{ for all } s \in I .$$
 By replacing $A$ by $A_{\epsilon}$ we can repeat the process above, untill we get $A_{\epsilon} = A'$.
 \par This completes the proof.
\end{proof}

Applying the above lemma several times until $\eta_1 = \eta_2 = \cdots = \eta_k$, we conclude the following:
\begin{corollary}
\label{cor:perturbate-diagonal-subgroup}
Suppose
\[\{u_k(\varphi(s))v\}_{s \in I} \subset V^{-0}(A),\]
then 
\[\{u_k(\varphi(s))v\}_{s \in I} \subset V^{-0}(A_{good}),\]
where $A_{good} : = \{a_{good} (t) := (b(t), b(t), \dots, b(t)) : t \in \R \}$.
\end{corollary}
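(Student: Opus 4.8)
The plan is to deduce the corollary from Lemma~\ref{lemma:perturbate-diagonal-subgroup} by a finite induction on the number of distinct values occurring among the weights $\eta_1,\dots,\eta_k$. Recall that we have arranged $\eta_1\geq\eta_2\geq\cdots\geq\eta_k>0$; let $m$ denote the number of distinct values in this list. Throughout, $\varphi$, $V$, and the nonzero vector $v$ stay fixed, so nothing in the hypotheses of the lemma is altered between applications.

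First I would dispose of the base case $m=1$. Then $\eta_1=\cdots=\eta_k=\eta$, so $a(t)=(b(\eta t),\dots,b(\eta t))=a_{good}(\eta t)$. Since the eigenspace decomposition of $V$ with respect to a one-parameter diagonal subgroup, and in particular the subspaces $V^{-}(\cdot)$, $V^{0}(\cdot)$, $V^{-0}(\cdot)$, is insensitive to a positive rescaling of the time parameter, we have $V^{-0}(A)=V^{-0}(A_{good})$, and there is nothing more to prove.

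For the inductive step, suppose $m\geq 2$ and that the assertion holds for every diagonal subgroup whose weight list has fewer than $m$ distinct values. Write $\eta=\eta_1=\cdots=\eta_{k_1}>\eta_{k_1+1}$ for the top block (so $k_1+1\leq k$). Lemma~\ref{lemma:perturbate-diagonal-subgroup} applies with exactly this $k_1$ and $\eta$, and gives $\{u_k(\varphi(s))v\}_{s\in I}\subset V^{-0}(A')$, where $A'$ has weights $\eta'_1=\cdots=\eta'_{k_1+1}=\eta_{k_1+1}$ and $\eta'_i=\eta_i$ for $i\geq k_1+2$. This new weight list is still nonincreasing (so Lemma~\ref{lemma:perturbate-diagonal-subgroup} could be reapplied to it without any re-sorting of indices), and its set of \emph{distinct} values is obtained from that of the original list by deleting the value $\eta$; hence it has exactly $m-1$ distinct values. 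Applying the induction hypothesis to $A'$ then yields $\{u_k(\varphi(s))v\}_{s\in I}\subset V^{-0}(A_{good})$, completing the induction.

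I do not expect any genuine obstacle here: all of the analytic input (Taylor expansion of $\varphi$ and the limiting argument) and the $\SL(2,\R)$-representation theory are already packaged inside Lemma~\ref{lemma:perturbate-diagonal-subgroup}, and the corollary is a bookkeeping argument on top of it. The only two points that deserve an explicit sentence of care are that each application of the lemma strictly decreases the number of distinct weights, so the procedure terminates after at most $k-1$ steps and reaches a subgroup all of whose weights equal $\eta_k$; and that $V^{-0}$ depends only on the oriented ray spanned by the weight vector, so rescaling that common value $\eta_k$ to $1$ does not change $V^{-0}$, which is what lets us identify the terminal subgroup with $A_{good}$.
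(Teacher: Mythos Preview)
Your proposal is correct and follows the same approach as the paper, which simply states that the corollary follows by ``applying the above lemma several times until $\eta_1=\eta_2=\cdots=\eta_k$''; you have merely spelled out the finite induction and the harmless time-rescaling at the end more explicitly than the paper does.
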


The following result can be regarded as a corollary of Lemma \ref{lemma:sl2-representations}:
\begin{corollary}
\label{cor:sl2-representation}
Let $H$, $V$, $\varphi: I \rightarrow (\R^{n-1})^k$ and $A_{good} : = \{a_{good}(t): t\in \R\}$ be as above. Suppose a nonzero vector $v \in V$ satisfies that 
$$u_{k}(\varphi(s))v \in V^{-0}(A_{good}) \text{ for all } s\in I,$$
then $v$ is fixed by $H$.
\end{corollary}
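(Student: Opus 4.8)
The strategy is to push the linear-dynamical condition $u_k(\varphi(s))v\in V^{-0}(A_{good})$ into the $\SL(2,\R)$-copies attached to the points of the curve and apply Lemma~\ref{lemma:sl2-representations}. No further perturbation of the diagonal subgroup is needed here, because $A_{good}$ has all exponents equal: for almost every $s\in I$ the element $u_k(\varphi(s))$ and the torus $A_{good}$ lie in a common copy $\SL(2,\varphi^1(s),\dots,\varphi^k(s))$ whose maximal split torus is exactly $A_{good}$, so the weight grading of $V$ used in Lemma~\ref{lemma:sl2-representations} for that copy coincides with the $A_{good}$-grading. Using this I would show that $v$ is forced into the rigid ``equality'' regime of Lemma~\ref{lemma:sl2-representations}, and then invoke the geometric hypothesis on $\varphi$ to conclude that $v$ is fixed by $H$.

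First I would normalise the curve. Fix $s_0\in I$ as in the geometric hypothesis of Theorem~\ref{thm:main-result}. Since $u_k$ is a homomorphism of $(\R^{n-1})^k$ and the conclusion ``$v$ is fixed by $H$'' is unchanged when $v$ is replaced by $hv$ for $h\in H$, we may replace $\varphi(\cdot)$ by $\varphi(\cdot)-\varphi(s_0)$ and $v$ by $u_k(\varphi(s_0))v$, so that $\varphi(s_0)=\vect 0$; taking $s=s_0$ then gives $v\in V^{-0}(A_{good})$, and by hypothesis $\varphi^i(s)\neq\vect 0$ for all $i$ and almost every $s\in I$. For such $s$, in the copy $\SL(2,\varphi^1(s),\dots,\varphi^k(s))$ one has $u_k(\varphi(s))\leftrightarrow\left[\begin{smallmatrix}1&1\\0&1\end{smallmatrix}\right]$, $u^-_k(\varphi(s)\inv)\leftrightarrow\left[\begin{smallmatrix}1&0\\1&1\end{smallmatrix}\right]$, $a_{good}(t)\leftrightarrow\left[\begin{smallmatrix}e^t&0\\0&e^{-t}\end{smallmatrix}\right]$ and $J(\varphi^1(s),\dots,\varphi^k(s))\leftrightarrow\left[\begin{smallmatrix}0&1\\-1&0\end{smallmatrix}\right]$.

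The crux is the following chain of deductions, which I expect to be the main obstacle, since it is where the linear-dynamical hypothesis gets turned into algebra. Because $v\neq\vect 0$ lies in $V^{-0}(A_{good})$, $\omega^{\max}(v)\le 0$; restricting $V$ to $\SL(2,\varphi^1(s),\dots,\varphi^k(s))$ and using Lemma~\ref{lemma:sl2-representations} with $r=1$, if $\omega^{\max}(v)<0$ then $\omega^{\max}(u_k(\varphi(s))v)>0$, contradicting $u_k(\varphi(s))v\in V^{-0}(A_{good})$; hence $\omega^{\max}(v)=0$, i.e.\ $v\in V^0(A_{good})$. Next, $u_k(\varphi(s))\in U^+(A_{good})$ acts on the weight-zero vector $v$ by strictly raising $A_{good}$-weights, so $u_k(\varphi(s))v-v\in V^+(A_{good})$; but this difference also lies in $V^{-0}(A_{good})$, hence $u_k(\varphi(s))v=v$ for every $s\in I$. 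Consequently $\omega^{\max}(u_k(\varphi(s))v)=0=-\omega^{\max}(v)$, the equality case \eqref{eq:omega-max-equality} of Lemma~\ref{lemma:sl2-representations} applies, and, using $v=v^{\max}$ (as $v\in V^0(A_{good})$), it gives for almost every $s\in I$
\[
v=u^-_k(-\varphi(s)\inv)\,v \qquad\text{and}\qquad v=J(\varphi^1(s),\dots,\varphi^k(s))\,v .
\]

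It then remains to globalise. Comparing the first relation at two parameters $s,s'$ gives $u^-_k\big(\varphi(s')\inv-\varphi(s)\inv\big)v=v$, so the stabiliser of $v$ in $U^-(A_{good})$, pulled back to $(\R^{n-1})^k$ along the isomorphism $u^-_k$, is a Zariski-closed subgroup of a vector group, hence a linear subspace, and it contains $\{\varphi(s_1)\inv-\varphi(s_2)\inv:s_1,s_2\in I\}$; by the geometric hypothesis this set spans $(\R^{n-1})^k$, so $v$ is fixed by all of $U^-(A_{good})$. Since $v$ is also fixed by $J(\varphi^1(s),\dots,\varphi^k(s))$, which conjugates the full horospherical subgroup $U^-(A_{good})$ onto $U^+(A_{good})$ (a direct matrix computation; equivalently $J(\vect x)$ normalises $Z(B)=MB$ and inverts $B$ in each $\SO(n,1)$ factor), $v$ is fixed by $U^+(A_{good})$ as well. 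Finally, since $U^{\pm}(A_{good})=\prod_{i=1}^{k}U^{\pm}(B)$ are the full product horospherical subgroups, $\langle U^+(A_{good}),U^-(A_{good})\rangle$ contains $\langle U^+(B),U^-(B)\rangle$ in each coordinate; as $U^+(B)$ and $U^-(B)$ generate $\SO(n,1)$ (their Lie algebras bracket-generate $\mathfrak{so}(n,1)$, since $[\mathfrak u^+(B),\mathfrak u^-(B)]$ is the Lie algebra of $Z(B)$), we conclude $\langle U^+(A_{good}),U^-(A_{good})\rangle=H$, so $v$ is fixed by $H$.
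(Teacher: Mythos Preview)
Your overall strategy matches the paper's: normalise to $\varphi(s_0)=\vect 0$, embed each $u_k(\varphi(s))$ into an $\SL(2,\R)$-copy whose split torus is $A_{good}$, invoke the equality clause of Lemma~\ref{lemma:sl2-representations}, and use the geometric hypothesis to obtain $U^-(A_{good})$-invariance. But there is a genuine gap in the ``crux'' paragraph: the step ``$\omega^{\max}(v)=0$, i.e.\ $v\in V^0(A_{good})$'' is unjustified. The condition $\omega^{\max}(v)=0$ only says $v^0\neq\vect 0$; it does not rule out components of $v$ in $V^{-}(A_{good})$. (Even adding the constraint $u_k(\varphi(s))v\in V^{-0}$ for one fixed $s$ this can fail: in the adjoint representation of $\SL(2,\R)$ with standard basis $e,h,f$, the vector $h-2f$ has $\omega^{\max}=0$ and $u(1)(h-2f)=-h-2f\in V^{-0}$, yet $h-2f\notin V^0$.) The subsequent deductions---that $u_k(\varphi(s))v-v\in V^+(A_{good})$, hence $u_k(\varphi(s))v=v$, and the identification $v=v^{\max}$---all rest on this unproved claim, so the two displayed relations involving $u_k^-$ and $J$ are not yet established for $v$ itself.

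The fix, which is exactly what the paper does, is to skip the detour and apply the equality case of Lemma~\ref{lemma:sl2-representations} directly: from $\omega^{\max}(v)=0$ and $\omega^{\max}(u_k(\varphi(s))v)\le 0$ (hypothesis) together with $\omega^{\max}(u_k(\varphi(s))v)\ge -\omega^{\max}(v)=0$ (the lemma), equality holds and yields $v=u^-_k(-\varphi(s)\inv)\,v^{0}$ (here $v^0=v^{\max}$) for almost every $s$. Comparing at two parameters gives $u^-_k(\varphi(s_1)\inv-\varphi(s_2)\inv)\,v^{0}=v^{0}$; the geometric hypothesis then shows that $v^{0}$ (not yet $v$!) is fixed by all of $U^-(A_{good})$, whence $v=u^-_k(-\varphi(s)\inv)v^{0}=v^{0}$, and only at this point is $v\in V^{0}(A_{good})$ legitimate. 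From here the paper concludes in one line: $v=v^0$ is fixed by $U^-(A_{good})$ and by $A_{good}$, hence by $H$. This is also cleaner than your $J$-conjugation route, which suffers from a minor mismatch: the equality clause \eqref{eq:omega-max-equality} of Lemma~\ref{lemma:sl2-representations} features $\sigma(1)=\left[\begin{smallmatrix}0&1\\1&0\end{smallmatrix}\right]$, not $J\leftrightarrow\left[\begin{smallmatrix}0&1\\-1&0\end{smallmatrix}\right]$.
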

\begin{proof}
Let us fix a point $s_0 \in I$ such that $(\varphi(s) - \varphi(s_0))\inv$ is defined for almost every $s \in I$ and $\{(\varphi(s_1)-\varphi(s_0))\inv - (\varphi(s_2)-\varphi(s_0))\inv: s_1, s_2 \in I\}$ is not contained in any proper subspace of $(\R^{n-1})^k$, then we have $u_{k}(\varphi(s_0))v \in V^{-0}(A_{good})$ and $u_{k}(\varphi(s) - \varphi(s_0)) u_{k}(\varphi(s_0))v \in V^{-0}(A_{good})$. Recall that $\SL(2, \varphi(s) - \varphi(s_0)) \subset H$ denotes the $\SL(2, \R)$ copy in $H$ such that $a_{good}(t)$ corresponds to $\begin{bmatrix}e^t & \\ & e^{-t}\end{bmatrix}$ and $u_{k}(\varphi(s) - \varphi(s_0))$ corresponds to $\begin{bmatrix}1 & 1 \\ 0 & 1\end{bmatrix}$ (see \S \ref{subsection:preliminaries}). Then by Lemma \ref{lemma:sl2-representations}, 
$$u_{k}(\varphi(s_0))v = u^{-}_{k}(-(\varphi(s) - \varphi(s_0))\inv) (u_{k}(\varphi(s_0))v)^0(A_{good})$$
for all $s \in I$ such that $(\varphi(s) -\varphi(s_0))\inv$ is defined. This implies that 
$(u_{k}(\varphi(s_0))v)^0(A_{good})$ is fixed by $u^{-}_{k}((\varphi(s_1) - \varphi(s_0))\inv-(\varphi(s_2) - \varphi(s_0))\inv)$ for any $s_1, s_2 \in I$ with the expression well defined. Since $\{(\varphi(s_1)-\varphi(s_0))\inv - (\varphi(s_2)-\varphi(s_0))\inv: s_1, s_2 \in I\}$ is not contained in any proper subspace of $(\R^{n-1})^k$, we have that $(u_{k}(\varphi(s_0))v)^0(A_{good})$ is fixed by the whole $U^-(A_{good})$. Note that $(u_{k}(\varphi(s_0))v)^0(A_{good})$ is also fixed by $A_{good}$. By basic facts of representation theory, we conclude that $(u_{k}(\varphi(s_0))v)^0(A_{good})$ is fixed by the whole $H$. Thus 
$v \in H (u_{k}(\varphi(s_0))v)^0(A_{good}) = (u_{k}(\varphi(s_0))v)^0(A_{good})$ is also fixed by $H$.
\par This completes the proof.
\end{proof}
Corollary \ref{cor:perturbate-diagonal-subgroup} and Corollary \ref{cor:sl2-representation} imply the following result:
\begin{lemma}
\label{lemma:basic-lemma-2}
Suppose there exists $v \in V \setminus{\vect{0}}$ such that 
\[\{u_k(\varphi(s))v\}_{s \in I} \subset V^{-0}(A),\]
then $v$ is fixed by $H$.
\end{lemma}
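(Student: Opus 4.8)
The plan is to obtain Lemma~\ref{lemma:basic-lemma-2} by concatenating the two corollaries just established. First I would apply Corollary~\ref{cor:perturbate-diagonal-subgroup} to the given vector $v$: its hypothesis is exactly $\{u_k(\varphi(s))v\}_{s\in I}\subset V^{-0}(A)$, and its conclusion is $\{u_k(\varphi(s))v\}_{s\in I}\subset V^{-0}(A_{good})$, where $A_{good}=\{(b(t),\dots,b(t)):t\in\R\}$ is the balanced diagonal one-parameter subgroup in which all the exponents $\eta_i$ have been equalized. I would then feed this into Corollary~\ref{cor:sl2-representation}, whose hypothesis is precisely $u_k(\varphi(s))v\in V^{-0}(A_{good})$ for all $s\in I$, and read off that $v$ is fixed by $H$. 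Since $v\ne\vect 0$ plays no further role, this finishes the proof.

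Because the statement is a formal corollary of the two preceding ones, there is no new difficulty localized in it; the genuine work has already been carried out in the results it invokes. If I were proving the whole chain from scratch, the step I expect to be hardest is the one inside Corollary~\ref{cor:perturbate-diagonal-subgroup}, namely Lemma~\ref{lemma:perturbate-diagonal-subgroup} --- the \emph{perturbation of the diagonal subgroup}. There one computes $\lim_{t\to+\infty} a(t)u_k(\varphi(s+re^{-\eta t}))v$ in two ways: once by Taylor-expanding $\varphi$ to first order and conjugating the resulting small unipotent element past $a(t)$, and once by projecting onto the $0$-eigenspace of $A$; comparing the two limits shows that $(u_k(\varphi(s))v)^0(A)$ is invariant under the unipotent one-parameter subgroup generated by $\big((\varphi^1)^{(1)}(s),\dots,(\varphi^{k_1})^{(1)}(s),\vect 0,\dots,\vect 0\big)$. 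Basic $\SL(2,\R)$-representation theory then places this $0$-component in $V^{+0}(B_{k_1})$, which is exactly what allows one to tilt $A$ infinitesimally in the $B_{k_1}$-direction and stay inside $V^{-0}$; iterating equalizes the $\eta_i$ one block at a time.

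A secondary point worth recording is how the geometric hypothesis on $\varphi$ enters, through Corollary~\ref{cor:sl2-representation}: after Lemma~\ref{lemma:sl2-representations} forces $u_k(\varphi(s_0))v = u^-_k\big(-(\varphi(s)-\varphi(s_0))\inv\big)\,(u_k(\varphi(s_0))v)^0(A_{good})$ for almost every $s$, one uses that $\{(\varphi(s_1)-\varphi(s_0))\inv-(\varphi(s_2)-\varphi(s_0))\inv : s_1,s_2\in I\}$ spans $(\R^{n-1})^k$ to pass from invariance under these particular elements of $U^-(A_{good})$ to invariance under all of $U^-(A_{good})$; together with $A_{good}$-invariance and the fact that $U^-(A_{good})$, $A_{good}$ and $U^+(A_{good})$ generate $H$, this yields that the $0$-component, hence $v$ itself, is fixed by $H$. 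This is precisely the non-degeneracy condition already assumed in Theorem~\ref{thm:main-result}, so no extra input is needed.
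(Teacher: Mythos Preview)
Your proposal is correct and matches the paper's own argument exactly: the paper states that Lemma~\ref{lemma:basic-lemma-2} follows immediately from Corollary~\ref{cor:perturbate-diagonal-subgroup} and Corollary~\ref{cor:sl2-representation}, which is precisely the concatenation you describe. Your additional commentary on where the real work lies (the perturbation argument in Lemma~\ref{lemma:perturbate-diagonal-subgroup} and the use of the geometric hypothesis in Corollary~\ref{cor:sl2-representation}) is accurate and faithful to the paper.
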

Finally we will finish the proof of Lemma \ref{lemma:basic-lemma-1}.
\begin{proof}[Proof of Lemma \ref{lemma:basic-lemma-1}]
Suppose $v \in V \setminus{\vect{0}}$ satisfies that $\{u_k(\varphi(s))v\}_{s\in I} \subset V^-(A)$, then by Lemma \ref{lemma:basic-lemma-2}, we have that $v$ is fixed by $H$, which contradicts the assumption.
\end{proof}

\section{Normal subgroups with closed orbits}
\label{section:normal-subgroups}

Recall that $a(t) = (b(\eta_1 t) , b(\eta_2 t) , \dots, b(\eta_k t))$ where 
\[\eta_{1} = \cdots = \eta_{k_1} > \eta_{k_1 +1} \geq \cdots \geq \eta_k.\]
Here we further assume that 
\[\begin{array}{l} \eta_{1} = \cdots = \eta_{k_1} \\
 > \eta_{k_1 +1} = \cdots = \eta_{k_2} \\ 
 > \eta_{k_2 + 1} =  \cdots \eta_{k_3} \\
  \cdots \\
  > \eta_{k_{j-1} + 1} = \cdots = \eta_{k_j} = \eta_k.
\end{array}\]
Recall that for $k' = 1,2,\dots, k$, $W_{k'} = \{ u_k(r \mathfrak{e}_{k'}): r \in \R\}$ is defined by \eqref{eq:W} and $W = W_{k_1}$. 
\par In this section, we will deal with the second difficulty discussed in \S \ref{subsection:outline-of-the-proof}. Let $\lambda_{\infty}$ be a limit measure of $\{\lambda_t: t >0\}$. Let us consider the following special case: there exists a normal subgroup $G_{1} \in \mathcal{L}$ such that
\[ \lambda_{\infty} (\pi(N(G_1, W))) >0, \text{ and } \lambda_{\infty}(\pi(S(G_1, W))) = 0. \]
\par For the case given as above, we will prove the following proposition:
\begin{proposition}
\label{prop:normal-subgroup}
$\lambda_{\infty}$ is invariant under the action of $W_{k_2}$.
\end{proposition}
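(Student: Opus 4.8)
The plan is to push the measure $\lambda_\infty$ down to the quotient homogeneous space defined by $G_1$ and rerun the unipotent-invariance argument there. Concretely, write $G(1) := G/G_1$ with $\Gamma(1) := \Gamma/(G_1\cap\Gamma)$ (which makes sense since $G_1\in\mathcal L$ is normal, so $G_1\cap\Gamma$ is a lattice in $G_1$), and $X(1) := G(1)/\Gamma(1)$, with natural projection $q\colon X\to X(1)$. Since $G_1$ is normal, $H_1 := G_1\cap H$ is a normal subgroup of $H$, and because $G_1\in\mathcal L$ with $\lambda_\infty(\pi(N(G_1,W)))>0$, $\lambda_\infty(\pi(S(G_1,W)))=0$, Ratner's theorem (Theorem~\ref{ratner}, last sentence) gives that the restriction of $\lambda_\infty$ to $\pi(N(G_1,W))$ is $G_1$-invariant; I will want in fact that $\lambda_\infty$ is fully $G_1$-invariant, which should follow because the measure is $W$-invariant and supported (up to the null set $\pi(S(G_1,W))$) on $\pi(N(G_1,W))$, so that $H_1\supset W$ and one can argue all ergodic components are $g\mu_{G_1}$-type as sketched in \S\ref{subsection:outline-of-the-proof}. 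Either way, $\lambda_\infty$ is $G_1$-invariant, hence it descends to a well-defined probability measure $\lambda_\infty(1) := q_*\lambda_\infty$ on $X(1)$.

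Next I would produce $\lambda_\infty(1)$ as a limit of pushed-down curve measures. Set $\lambda_t(1) := q_*\lambda_t$, the normalized measure on the image of the curve $\{z(s)a(t)u_k(\varphi(s))x : s\in I\}$ in $X(1)$. Along the subsequence $t_i$ with $\lambda_{t_i}\to\lambda_\infty$ we get $\lambda_{t_i}(1)\to\lambda_\infty(1)$. Now the image group $H(1) := H/H_1$ is again a product of some of the $\SO(n,1)$ factors (those indexed by $i$ with the $i$-th factor not inside $G_1$); the image of $A$ in $H(1)$ is a diagonal subgroup with exponents the corresponding subfamily of $\{\eta_i\}$, and the image of $u_k(\varphi)$ is a curve of the same shape in the expanding horospherical subgroup of $H(1)$. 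The key point: since $W = W_{k_1}\subset H_1$ (as $H_1\supseteq W$), the factors $1,\dots,k_1$ collapse, so in $H(1)$ the \emph{largest} surviving exponent is $\eta_{k_1+1} = \cdots = \eta_{k_2}$, with multiplicity $k_2-k_1$. I would then repeat verbatim the computation in the proof of Proposition~\ref{prop:unipotent-invariance}: with the rescaling $s\mapsto s + r e^{-2\eta_{k_1+1}t_i}$, the error terms $u_k(O(e^{-2(\eta_{k_1+1}-\eta_{k_2+1})t_i}))$ go to $e$, and one concludes that $\lambda_\infty(1)$ is invariant under the one-parameter unipotent subgroup $\overline W$ of $H(1)$ that is the image of $W_{k_2}$. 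Pulling this back through $q$ — i.e.\ using that $W_{k_2}$ acts on $X$ and its action commutes with $q$ in the appropriate sense, with $\lambda_\infty(1)$ being $\overline W$-invariant — shows that $\lambda_\infty$ itself is invariant under $W_{k_2}\cdot H_1$, and in particular, since $H_1$-invariance we already have, under $W_{k_2}$.

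Let me spell out the pull-back step, which is the one needing care. We have $\lambda_\infty$ is $H_1$-invariant and $q_*\lambda_\infty = \lambda_\infty(1)$ is invariant under $\overline W = $ image of $W_{k_2}$ in $H(1)$. Given $f\in C_c(X)$ and $w\in W_{k_2}$, I want $\int f(wx)\,d\lambda_\infty = \int f\,d\lambda_\infty$. This is not immediate from $q$-equivariance alone because $q$ is not injective; the clean way is to disintegrate $\lambda_\infty$ over $q$: the fibers of $q\colon X\to X(1)$ are $G_1$-orbits, $\lambda_\infty$ restricted to a fiber is the $G_1$-invariant (Haar) measure, and $W_{k_2}$ acts on $X$ normalizing $G_1$ (since $G_1\lhd G$), hence $W_{k_2}$ permutes the fibers compatibly with its action $\overline W$ on the base and preserves the conditional Haar measures (the Haar measure on a $G_1$-coset is $W_{k_2}$-quasi-invariant with modular factor $|\det\Ad_w|_{\mathfrak g_1}|$, which is $1$ for $w$ unipotent). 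Combining the base-invariance with the fiberwise-invariance yields $W_{k_2}$-invariance of $\lambda_\infty$.

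The main obstacle I anticipate is exactly this interplay between the quotient and the original space: making rigorous that $\lambda_\infty$ is genuinely $G_1$-invariant (not merely $G_1$-invariant on $\pi(N(G_1,W))$) so that the push-forward is well-behaved, and then that the disintegration argument correctly transfers $\overline W$-invariance downstairs back to $W_{k_2}$-invariance upstairs. The dynamical rescaling computation itself is a routine repeat of Proposition~\ref{prop:unipotent-invariance} once one has checked that $\eta_{k_1+1}$ is the top exponent in $H(1)$ and that the next gap $\eta_{k_1+1}-\eta_{k_2+1}>0$ supplies the decaying error; the algebraic bookkeeping of which factors survive in $H/H_1$ should be stated carefully but presents no real difficulty, since $H_1\lhd H$ with $H = (\SO(n,1))^k$ forces $H_1$ to be a product of a subcollection of the simple factors.
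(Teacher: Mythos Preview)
Your approach is essentially the same as the paper's: push down to $X(1)=G(1)/\Gamma(1)$, rerun the unipotent-invariance computation of Proposition~\ref{prop:unipotent-invariance} there to obtain $\overline W$-invariance of $\lambda_\infty(1)$, and then combine with the $G_1$-invariance of $\lambda_\infty$ (via the disintegration $\lambda_\infty=\int g\mu_{G_1}\,d\lambda_\infty(1)$) to conclude $W_{k_2}$-invariance upstairs. The only small omission is the trivial case the paper singles out explicitly: if every factor with index $\le k_2$ already lies in $H_1$, then $W_{k_2}\subset H_1\subset G_1$ and the conclusion follows immediately from $G_1$-invariance; your assertion that ``the largest surviving exponent is $\eta_{k_1+1}$'' tacitly assumes at least one such factor survives, so you should state this case separately.
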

\begin{proof}
\par By Theorem \ref{ratner}, we have that every $W$-ergodic component of the restriction of $\lambda_{\infty}$ on $\pi(N(G_1, W))$ is of form $g\mu_{G_1}$. Since $G_{1}$ is a normal subgroup of $G$, we have that $W \subset G_{1}$ and $N(G_1, W) = G$. This implies that every $W$-ergodic component of 
$\lambda_{\infty}$ is of form $g \mu_{G_1}$. In particular, $\lambda_{\infty}$ is $G_1$ invariant. 
\par Let $H_1 := H \cap G_1$, then $H_1$ is a normal subgroup of $H$ containing $W$. Note that $H = (\SO(n,1))^{k}$, we have that $H_1$ is of form $\prod_{i=1}^k Q_{i}$ where each $Q_i$ is either $\SO(n,1)$ or $\{e\}$. Let us call $Q_i$ the $i$th component of $H_1$. Since $W = W_{k_1} \subset H_1$,we have that $Q_{i} = \SO(n,1)$ for $i=1,\dots, k_1$. If the first $k_{2}$ components of $H_1$ are all $\SO(n,1)$, then there is nothing to prove ($W_{k_2}$ is contained in $H_1$ which fixes $\lambda_{\infty}$). Thus, later in the proof, we will assume that for some $ k_1 +1 \leq i  \leq k_{2}$, $Q_i = \{e\}$. Let $H(1) := H/H_1$. Since the first $k_1$ components of $H_1$ are all $\SO(n,1)$, $H(1)$ can be written as $\prod_{i= k_1 +1}^{ k} S_{i}$ where each $S_{i}$ is either $\SO(n,1)$ or $\{e\}$. For simplicity, we can realize $H(1)$ as $(\SO(n,1))^{k - k_1}$. For the same reason, we can realize $A(1) := A/ A \cap H_{1}$ as $\{a_1(t):=(b(\eta_{k_1 +1} t) , \dots, b(\eta_k t)) : t \in \R\}$ and $U^{+}(1) : = U^{+}(A)/ U^{+}(A)\cap H_1$ as 
\[\{ u'(\vect{x}_{k_1 + 1} , \dots, \vect{x}_k) := (u(\vect{x}_{k_1 + 1}) , \dots, u(\vect{x}_k)): \vect{x}_{k_1 + 1} , \dots, \vect{x}_k \in \R^{n-1}\}.\] 
Let $\varphi_{1}: I \rightarrow (\R^{n-1})^{k-k_1}$, and $z_{1}: I \rightarrow (\SO(n-1))^{k-k_1}$ respectively, denote the projection of $\varphi: I \rightarrow (\R^{n-1})^{k}$, and $z: I \rightarrow (\SO(n-1))^k$ respectively, onto the last $k - k_{1}$ components.
\par Let $G(1) := G / G_1$, $\Gamma(1) :=\Gamma / \Gamma \cap G_1$, then $X(1):= G(1)/\Gamma(1)$ is also a homogeneous space. The projection of $z(s)a(t)u_k(\varphi(s))x$ onto $X(1)$ can be realized as $z_1(s)a_1(t)u'(\varphi_1(s))x$. Let $\lambda_t(1)$ denote the normalized linear measure on $\{z_1(s)a_1(t)u'(\varphi_1(s))x: s \in I\}$, and $\lambda_{\infty}(1)$ denote the projection of $\lambda_{\infty}$ onto $X(1)$, then $\lambda_{\infty}(1)$ is a limit measure of $\{\lambda_t(1): t>0\}$, and $\lambda_{\infty}$ can be written as 
\begin{equation}
\label{equ:disintegrate-lambda}
\lambda_{\infty} = \int_{g \in \mathcal{F}(1)} g\mu_{G_1} \dd \lambda_{\infty}(1)(g),
\end{equation}
where $\mathcal{F}(1) \subset G(1)$ is a fundamental domain of $G(1)/\Gamma(1)$. By repeating the proof of Proposition \ref{prop:unipotent-invariance}, we conclude that $\lambda_{\infty}(1)$ is invariant under the action of
\[W(1):= \{u'(r \mathfrak{e}'_{k_2}) : r \in \R \},\]
where $\mathfrak{e}'_{k_2} \in (\R^{n-1})^{k-k_1}$ denotes the projection of $\mathfrak{e}_{k_2}$ onto the last $k- k_1$ components. Combined with \eqref{equ:disintegrate-lambda}, this implies that $\lambda_{\infty}$ is invariant under the action of $W_{k_2}$.
\end{proof}

\section{Conclusion}
\label{section:conclusion}
In this section, we will finish the proof of Theorem \ref{thm:main-result}. 
\begin{proof}[Proof of Theorem \ref{thm:main-result}]
For any convergent subsequence $\lambda_{t_i} \rightarrow \lambda_{\infty}$, we need to show that $\lambda_{\infty} = \mu_G$. Suppose not, then by Proposition \ref{prop:algebraic-condition}, there exist $L \in \mathcal{L}$ and $\gamma \in \Gamma$, such that 
\[\{u_k(\varphi(s))g\gamma p_L \}_{s \in I} \subset V^{-0}(A).\]
Applying Lemma \ref{lemma:basic-lemma-2}, we have that $g\gamma p_L$ is fixed by the action of $H$. Then $p_L$ is fixed by $(g\gamma)\inv H (g\gamma)$. Thus 
\[\begin{array}{rcl}
 \Gamma p_L & = & \overline{\Gamma p_L} \text{ (because $\Gamma p_L$ is discrete)} \\
            & = & \overline{\Gamma \gamma\inv g\inv H g \gamma p_L} \\
            & = & \overline{\Gamma g \inv H g \gamma p_L} \\
            & = & G p_L \text{ (because $\Gamma g \inv H$ is dense in $G$)}.
\end{array}\]
Let $G_{0}$ denote the connected component of $e$ in $G$, then $G_0 p_L = p_L$. 
Therefore $\gamma^{-1} g^{-1}H g
\gamma\subset G_0$ and $G_0 \subset N^1_{G}(L)$. By \cite[Theorem
2.3]{Shah_3}, there exists a closed subgroup $F_1 \subset
N^1_{G}(L)$ containing all $\mathrm{Ad}$-unipotent one-parameter
subgroups of $G$ contained in $N^1_{G}(L)$ such that $F_1 \cap
\Gamma$ is a lattice in $F_1$ and $\pi (F_1)$ is closed. If we put
$F= g\gamma F_1 \gamma^{-1} g^{-1}$, then $H\subset
F$ since $H$ is generated by it unipotent
one-parameter subgroups. Moreover, $Fx =g \gamma \pi(F_1)$ is closed
and admits a finite $F$-invariant measure. Then since
$\overline{Hx}=G/\Gamma$, we have $F=G$. This implies
$F_1 = G$ and thus $L$ is a normal subgroup of $G$. By Proposition \ref{prop:normal-subgroup}, $\lambda_{\infty}$ is invariant under the action of $W_{k_2}$ where $k_{2} > k_1$. By repeating the argument above with $W = W_{k_1}$ replaced by $W_{k_2}$, we conclude that there exists a normal subgroup $L_2 \in \mathcal{L}$ containing $W_{k_2}$ such that every $W_{k_2}$-ergodic component of $\lambda_{\infty}$ is of form $g\mu_{L_2}$. Repeating the proof of Proposition \ref{prop:normal-subgroup}, we can conclude that $\lambda_{\infty}$ is invariant under the action of $W_{k_3}$. Repeating this process, we will get that $\lambda_{\infty}$ is invariant under $W_{k_j} = W_{k}$. Moreover, there exists a normal subgroup $L_j \in \mathcal{L}$ containing $W_{k}$ such that every $W_k$-ergodic component of $\lambda_{\infty}$ is of form $g \mu_{L_j}$. Since $H \cap L_j$ is a normal subgroup of $H$ containing $W_k$, we have that $H\subset L_j$. Since $L_j$ is a
normal subgroup of $G$ and $\pi(L_j)$ is a closed orbit with finite
$L_j$-invariant measure, every orbit of $L_j$ on $G/\Gamma$ is also
closed and admits a finite $L_j$-invariant measure, in particular,
$L_j x$ is closed. But since $Hx$ is dense in
$G/\Gamma$, $L_j x$ is also dense. This shows that $L_j=G$, which
contradicts our hypothesis that the limit measure is not $\mu_G$.
\par This completes the proof. 
\end{proof}

\medskip

\bibliography{reference.bib}{}
\bibliographystyle{alpha}

\end{document}